\theoremstyle{plain}
\newtheorem{theorem}{Theorem}
\newtheorem{lemma}[theorem]{Lemma}
\newtheorem{proposition}[theorem]{Proposition}
\newtheorem{corollary}[theorem]{Corollary}
\newtheorem*{claim*}{Claim}
\theoremstyle{definition}
\newtheorem{remark}[theorem]{Remark}
\newcommand{\BZ}{\mathbb{Z}}
\newcommand{\BS}{\mathbb{S}}
\newcommand{\calA}{\mathcal{A}}
\newcommand{\calC}{\mathcal{C}}
\newcommand{\calE}{\mathcal{E}}
\newcommand{\calS}{\mathcal{S}}
\newcommand{\calV}{\mathcal{V}}
\newcommand{\calX}{\mathcal{X}}
\newcommand{\te}{{\tilde{e}}}
\newcommand{\tne}{{\tilde{\text{\sc{e}}}}}
\newcommand{\nate}{\text{\sc{e}}}
\newcommand{\from}{\colon\thinspace}
\newcommand{\la}{\langle}
\newcommand{\ra}{\rangle}
\newcommand{\bd}{\partial}
\newcommand{\cv}{cv}
\newcommand{\Cylo}{Cyl}
\DeclareMathOperator{\Out}{Out}
\DeclareMathOperator{\cplx}{\xi}
\DeclareMathOperator{\rk}{rk}
\DeclareMathOperator{\vol}{vol}
\begin{document}

%%%%%%%%%%%%%%%%%%%%%%%%%%%%%%%%%%%%%%%%%%%%%%%%%%%%%%%%%%%%%%%%%%%%%%%%%%%%% 

\title[Volume of periodic free factors]{An algorithm to detect\\full irreducibility by bounding the volume of periodic free factors}

\author[M.~Clay]{Matt Clay}
\address{Dept.\ of Mathematics \\
  University of Arkansas\\
  Fayetteville, AR 72701}
\email{\href{mailto:mattclay@uark.edu}{mattclay@uark.edu}}

\author[J.~Mangahas]{Johanna Mangahas}
\address{Dept.\ of Mathematics \\
  Brown University\\
  Providence, RI 02912}
\email{\href{mailto:mangahas@math.brown.edu}{mangahas@math.brown.edu}}

\author[A.~Pettet]{Alexandra Pettet}
\address{Dept.\ of Mathematics \\
  University of British Columbia\\
  Vancouver, BC V6T 1Z2}
\email{\href{mailto:alexandra@math.ubc.ca}{alexandra@math.ubc.ca}}

\thanks{\tiny The first author is partially supported by NSF grant
  DMS-1006898. The second author is partially supported by NSF award 
  DMS-1204592. The third author is partially supported by an NSERC
  Discovery grant.}

\begin{abstract} 
  We provide an effective algorithm for determining whether an element
  $\phi$ of the outer automorphism group of a free group is fully
  irreducible. Our method produces a finite list which can be checked
  for periodic proper free factors. 
\end{abstract}

\maketitle

%%%%%%%%%%%%%%%%%%%%%%%%%%%%%%%%%%%%%%%%%%%%%%%%%%%%%%%%%%%%%%%%%%%%%%%%%%%%% 

\section{Introduction}

Let $F$ be a finitely generated nonabelian free group of rank at least
2. An outer automorphism $\phi$ is \emph{reducible} if there exists a
free factorization $F = A_1 \ast \cdots A_k \ast B$ such that $\phi$
permutes the conjugacy classes of the $A_i$; else it is
\emph{irreducible}.  Although irreducible elements have nice
properties, e.g., they are known to possess irreducible train-track representatives, 
irreducibility is not preserved under iteration.  Thus one often considers
elements that are \emph{irreducible with irreducible powers
  \textup{(}iwip\textup{)}}, or \emph{fully
  irreducible}.  These are precisely the outer automorphisms $\phi$
for which there does not exist a proper free factor $A < F$ whose
conjugacy class $[A]$ satisfies $\phi^p([A]) = [A]$ for any $p > 0$.
If $\phi^p([A]) = [A]$ for some proper free factor $A < F$ and for
some $p>0$, we say $[A]$ is $\phi$--periodic, and, to avoid cumbersome language, also that the free factor $A$ is
$\phi$--periodic.  Fully irreducible elements are considered analogous
to pseudo-Anosov mapping classes of hyperbolic surfaces. As such, they
play an important role in the geometry and dynamics of the outer
automorphism group $\Out(F)$ of $F$.

Although considered in some sense a ``generic'' property in $\Out(F)$,
full irreducibility is not generally easy to detect. Kapovich
\cite{un:Kapovich} gave an algorithm for determining whether a given
$\phi \in \Out(F)$ is fully irreducible, inspired by Pfaff's criterion for
full irreducibility in \cite{un:Pfaff}. At points in his
algorithm, two processes run simultaneously, and although it is known
that one of these must terminate, it is not \emph{a priori} known
which will; it thus seems unclear that the complexity of Kapovich's
algorithm can be found without running the algorithm itself.

For mapping class groups and braid groups, there exist algorithms for
determining whether or not a given element is pseudo-Anosov
\cite{ar:Bestvina-Handel95,ar:Chen-Hamidi-Tehrani,un:Bell,ar:BernadeteGutierrezNitecki,ar:Los,un:Calvez}. Recently,
Koberda and the second author \cite{un:Koberda-Mangahas} provided an
elementary algorithm for determining whether or not a given mapping
class is pseudo-Anosov, using a method of ``list and check.'' They
show that if a mapping class $f$ is reducible, i.e., has an invariant
multicurve, then the curves in its reduction system have length
bounded by an exponential function in terms of the number of
generators needed to write $f$.  Therefore, given a mapping class $f$,
a \textbf{list} is produced of all multicurves whose curves are
sufficiently short. The action of $f$ is then \textbf{checked} on
these finitely many multicurves. If $f$ fixes a multicurve from the
list, it is reducible; otherwise, it is necessarily pseudo-Anosov.

In this article, we provide, in essence, a method of ``list and check'' for
elements of $\Out(F)$, akin to that of Koberda and the second author.
That is, we provide an algorithm which, given an element $\phi$
expressed as a product of generators from a finite generating set of
$\Out(F)$, produces a finite list of free factors and checks each for
$\phi$-periodicity. The algorithm effectively determines whether or
not the given element $\phi$ is fully irreducible. By
\emph{effective}, we understand that there is a computable function which bounds the number of steps in terms of the size of the input and that does not utilize the algorithm. In particular, we avoid the use of dual processes, one of which must terminate.

\subsection*{Acknowledgements} 

We would like to thank the Centre de Recerca Matem\`atica for its
hospitality during its research program, \emph{Automorphisms of Free
  Groups: Algorithms, Geometry, and Dynamics}, in Fall, 2012.  The
authors also thank Sam Taylor for comments on an earlier version of
this work.

\section{Statement of Results}\label{sec:results}

By $\rk(F)$ we denote the rank of the free group $F$.  Let $\cplx(F) =
3\rk(F) - 3$.  This is the maximum number of edges in a finite graph with fundamental group $F$ and without degree one or two vertices. This is also the 
maximum number of isotopy classes of disjoint, essential (not
bounding a ball) spheres in the double of the handlebody of genus
$\rk(F)$.  An element $\phi \in \Out(F)$ that is not fully irreducible
is \emph{cyclically reducible} if there exists a $\phi$--periodic rank
1 free factor; else it is \emph{noncyclically reducible}.

Our algorithm to determine full irreducibility of an element $\phi \in
\Out(F)$ consists of two effective processes. Process I determines (in
the absence of an obvious reduction) if $\phi$ is cyclically
reducible.  As we shall see in Section \ref{sec:listandcheck}, this
will exploit algorithms which are already well-known. Our main
contribution to the algorithm is in process II.  For this we construct
a finite list of conjugacy classes of proper free factors that
contains a $\phi$--periodic free factor if $\phi$ is noncyclically
reducible.  The length of this list is controlled by the word length
of $\phi$; this is the content of Theorem \ref{thm:main}. A systematic
check of the list then determines whether or not $\phi$ is fully
irreducible.

To state our main theorem, we start by fixing a basis $\calX$ for the
free group $F$, and let $T = T_\calX$ denote the Cayley graph for $F$
with respect to $\calX$. Given a subgroup $A \leq F$, the
\emph{volume} $\| A \|_\calX$ of $A$ is the number of edges in the
\emph{Stallings core} of the graph $T/F$. Recall that the Stallings
core is the graph $T_A/A$, where $T_A$ is the minimal subtree of $T$
with respect to the action of $A$; or, equivalently, the Stallings
core is the smallest subgraph of the cover of $T/F$ associated to $A$
that contains every embedded cycle (see~\cite{ar:Stallings} for
details). Note that the volume function $\| \cdot \|_\calX$ is
constant on conjugacy classes of subgroups. The quantity $\| A
\|_\calX$ gives some measure of the complexity of the subgroup $A$ in
terms of the basis $\calX$. For instance, if $A = \la a \ra$ is a
cyclic subgroup, the volume $\| \la a \ra \|_\calX$ is the cyclic
length of the element $a$ as a word in the basis $\calX$.

Now fix a finite generating set $\calS$ for $\Out(F)$. Denote by
$\vert \phi\vert_\calS$ the word length of $\phi \in \Out(F)$ with
respect to $\calS$. Our main theorem describes a relation between the
word length of a noncyclically reducible element of $\Out(F)$ and the
volume of one of its periodic free factors.

\begin{theorem}\label{thm:main}
There is a computable constant $C = C(\calX,\calS)$ such that, for any $\phi \in \Out(F)$, either
\begin{itemize}
\item[(i)] $\phi$ is fully irreducible, or
\item[(ii)] $\phi$ has a periodic rank--1 free factor, or
\item[(iii)] $\phi$ has a periodic proper free factor $A$ such that $\| A \|_\calX \leq C^{|\phi|_\calS}$.
\end{itemize}
%  There is a computable constant $C = C(\calX,\calS)$ such that if $\phi \in \Out(F)$ is noncyclically reducible, then there is a $\phi$--periodic proper free factor $A$ such that $\| A \|_\calX \leq C^{|\phi|_\calS}$.
\end{theorem}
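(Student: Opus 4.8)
The plan is to power the whole argument with a single \emph{volume-distortion} estimate and then feed it a periodic free factor of \emph{a priori} bounded volume, obtained from a structural reduction. First I would show that each generator distorts the volume of a subgroup by at most a bounded multiplicative factor. For each $s \in \calS \cup \calS^{-1}$ fix a representative automorphism $\Phi_s \in \Aut(F)$, let $\ell_s$ be the maximum, over $x \in \calX$, of the number of letters in the reduced word $\Phi_s(x)$, and set $\lambda = \max_s \ell_s$. The claim is that $\| \Phi_s(A) \|_\calX \le \lambda\, \| A \|_\calX$ for every finitely generated $A \le F$. To prove it, realize the Stallings core of $A$ as a labelled graph with $\|A\|_\calX$ edges immersed in the rose $T_\calX / F$, and realize $\Phi_s$ as the map of the rose sending the petal $x$ to the edge-path spelling $\Phi_s(x)$, of length $\le \lambda$. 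The composite carries the subgroup $\Phi_s(A)$; after subdivision its domain has at most $\lambda\,\|A\|_\calX$ edges, and Stallings folding to an immersion only decreases the edge count. Since $\| \cdot \|_\calX$ is a conjugacy invariant this passes to conjugacy classes, and iterating along a geodesic word $\phi = s_1 \cdots s_n$ with $n = |\phi|_\calS$ gives
\[
\| \phi^{\pm 1}([A]) \|_\calX \le \lambda^{\,|\phi|_\calS}\, \| [A] \|_\calX .
\]
This identifies $\lambda$ as the source of the base $C$ in conclusion (iii); everything then reduces to locating a periodic free factor of controlled volume.

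Next I would isolate a periodic free factor whose internal dynamics are as simple as possible. Assume $\phi$ is neither fully irreducible nor has a periodic rank--$1$ free factor, so that it has a $\phi$--periodic proper free factor and none of rank $1$. Choose one, $A$, of minimal rank $r \ge 2$, with $\phi^p([A]) = [A]$, and let $\psi = \phi^p|_A \in \Out(A)$ be the induced outer automorphism. I claim $\psi$ is fully irreducible on $A$: otherwise $A$ contains a $\psi$--periodic proper free factor $A'$, which is again a free factor of $F$ and is $\phi$--periodic, and whose rank is either $1$ (excluded, since $\phi$ has no periodic rank--$1$ factor) or strictly between $1$ and $r$ (contradicting minimality of $r$). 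Thus the minimal-rank periodic free factor carries a fully irreducible restriction and so admits a train-track structure on a marked graph with fundamental group $A$ having no proper invariant sub-structure.

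The crux, and the step I expect to be the main obstacle, is to bound $\| A \|_\calX$ \emph{intrinsically}: the bare relation $\phi^p([A]) = [A]$ controls neither the volume of $A$ nor the period $p$, so the distortion estimate alone cannot bound it. My plan is to produce a \emph{bounded-complexity witness} for $A$ and then pull it back through the distortion estimate. Concretely, I would build a topological representative of $\phi$ on a marked graph carrying a $\phi$--invariant free factor system that realizes $[A]$, assembled from the bounded-length words $\Phi_s(x)$, and argue that the folds and subdivisions needed to expose the invariant subgraph can be kept of size bounded in terms of $|\phi|_\calS$. Reading the fundamental group of that invariant subgraph back into $\calX$ through the marking would then bound $\| A \|_\calX$ by the subgraph volume times the marking distortion, each of size $\le C^{|\phi|_\calS}$. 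The delicate point is precisely this size control: relative train-track representatives are not \emph{a priori} bounded in terms of $|\phi|_\calS$, so making the reducing representative effective — rather than merely existent — is where the argument must do genuine work, and is the one place the distortion lemma does not apply formally. Once a periodic free factor of volume $\le C^{|\phi|_\calS}$ is in hand, conclusion (iii) follows, and the finitely many free factors of volume at most $C^{|\phi|_\calS}$ are exactly the list to be checked.
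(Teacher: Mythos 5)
Your distortion estimate and your reduction to a minimal-rank periodic free factor with fully irreducible restriction are both fine (the paper makes essentially the same reduction, except that it invokes the Feighn--Handel/Handel--Mosher uniform power $Q = Q(\rk F)$ so that every periodic free factor is genuinely $\phi^Q$--invariant; your period $p$ is a priori unbounded, which would eventually poison any bound of the form $C^{|\phi|_\calS}$ with $C$ independent of $\phi$). But your third paragraph, which is where the entire content of the theorem lives, is not an argument: you correctly observe that the distortion lemma cannot bound $\| A \|_\calX$, and then propose to bound it by controlling ``the folds and subdivisions needed to expose the invariant subgraph'' of a topological representative --- while conceding yourself that no such control is supplied. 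That is precisely the obstruction this paper is designed to circumvent: making a reducing (relative train track) representative effective in $|\phi|_\calS$ is the difficulty behind the dual-process issue in Kapovich's algorithm, and nothing in your sketch resolves it.

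The paper's actual mechanism is entirely different and never touches topological representatives. Writing $T = T_\calX$ and $T_A$ for the minimal subtree of the invariant factor $A$, it shows: (1) since $\phi^Q|_A$ is fully irreducible, within $\cplx(A)$ further powers the longest edge of $T_A$ must contribute a square to the Guirardel core $\calC(T_A \times T_A\phi^{QP})$ (proved via sphere systems in the doubled handlebody, or alternatively by a refinement argument), whence $\| A \|_\calX = \vol(T_A/A) \le \cplx(A)\, i(T_A, T_A\phi^{QP})$ (Lemma~\ref{lem:vol-upperbound}); (2) the intersection number of the minimal subtrees is dominated by that of the ambient trees, $i(T_A, T_A\phi^{QP}) \le 6\cplx(A)\,\lambda(T,T\phi^{QP})^3\, i(T,T\phi^{QP})$ (Proposition~\ref{prop:minimal-bound}); and (3) Horbez's bound $i(S,T) \le 2\rk(F)^3\lambda(S,T)^4$ converts this into an exponential function of $|\phi|_\calS$, since $\lambda(T,T\phi^{QP})$ is submultiplicative in the word length and $QP \le Q\cplx(F)$ is a constant. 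The full irreducibility of the restriction is thus used quantitatively --- it forces a definite amount of intersection per unit of volume of $A$ --- rather than merely structurally, as in your sketch. To repair your proposal you would need to replace the third paragraph wholesale with an argument of this kind, i.e.\ some effective lower bound on how much $\phi^{QP}$ must ``move'' $T_A$ in terms of $\vol(T_A/A)$.
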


In other words, if $\phi$ is noncyclically reducible, then $C^{|\phi|_\calS}$ bounds the volume of some proper $\phi$--periodic free factor.  An exact formula for $C$ is given at the end of
Section~\ref{sec:main}.

As there are a finite number of conjugacy classes of free
factors $A$ of $F$ for which $\| A \|_\calX$ is bounded, the theorem
provides a bound for the size of a list of conjugacy classes of free
factors that can be used to conclusively determine whether or not an
element $\phi \in \Out(F)$ of length $| \phi |_\calS$ is fully
irreducible, if $\phi$ is not cyclically reducible.

To prove Theorem~\ref{thm:main}, we utilize a notion of
\emph{intersection number} $i(S,T)$ defined between a pair of trees
$S$ and $T$ equipped with an isometric action by $F$, as defined by
Guirardel~\cite{ar:Guirardel}.  Horbez \cite{un:Horbez} related the
intersection number $i(T,T\phi)$ to the word length of $\phi \in
\Out(F)$ (Section~\ref{sec:intersection}, Theorem \ref{thm:Horbez}). We thus need only bound the
volume of a $\phi$--periodic proper free factor by $i(T,T\phi)$
(Section \ref{sec:main}, Proposition \ref{prop:max-fixed-length}).

Before embarking on the details of the proof of Theorem
\ref{thm:main}, we will first describe the procedure used in our
algorithm for detecting fully irreducible elements of $\Out(F)$. This
is contained in the next section, where we establish:
\begin{theorem}\label{thm:iwip-algorithm}
  There exists an effective algorithm for determining if an outer
  automorphism is fully irreducible.
\end{theorem}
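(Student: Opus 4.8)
The plan is to convert Theorem~\ref{thm:main} into the ``list and check'' procedure promised in Section~\ref{sec:results}. Given $\phi$ presented as a word in $\calS$, the algorithm runs two effective subroutines. \textbf{Process I} decides whether $\phi$ is cyclically reducible, i.e.\ whether it has a periodic rank--$1$ free factor; this is alternative (ii) of Theorem~\ref{thm:main}, and I would reduce it to classical decision problems for free groups (Whitehead's algorithm together with the recognition of whether a cyclic word lies in a finite $\la \phi \ra$--orbit), which I invoke as known. \textbf{Process II} handles alternative (iii): I would compute the integer $N = C^{|\phi|_\calS}$ from the computable constant $C = C(\calX,\calS)$ supplied by Theorem~\ref{thm:main}, enumerate the finite set $\calL$ of conjugacy classes of proper free factors $A$ with $\|A\|_\calX \le N$, and test each member of $\calL$ for $\phi$--periodicity. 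If either process locates a periodic proper free factor, the algorithm declares $\phi$ not fully irreducible; otherwise it declares $\phi$ fully irreducible. Correctness is immediate from Theorem~\ref{thm:main}: a non--fully irreducible $\phi$ falls under (ii) or (iii), so it has either a periodic rank--$1$ factor (found by Process~I) or a periodic proper factor of volume at most $N$, which then lies in $\calL$ and is found by Process~II; hence a negative answer from both processes forces alternative (i).

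It remains to see that each step is effective. The constant $C$, and therefore $N$, is computable without reference to the algorithm itself --- this is exactly the role of the explicit formula for $C$ at the end of Section~\ref{sec:main}. The enumeration of $\calL$ is effective because a free factor is determined up to conjugacy by its Stallings core, and there are only finitely many, effectively listable, core graphs with at most $N$ edges that immerse into the rose $T/F$ on $\rk(F)$ petals; among these one retains those whose associated subgroup is a proper free factor, using the classical algorithm that decides whether a finitely generated subgroup of $F$ is a free factor.

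The main obstacle is the \emph{check}: deciding, for a fixed $A \in \calL$, whether $\phi^p([A]) = [A]$ for some $p > 0$. Since the $\la \phi \ra$--orbit of $[A]$ may wander outside the volume range $\le N$ before returning, one cannot simply search for cycles inside $\calL$, and blind iteration provides no stopping rule when $[A]$ fails to be periodic. I would circumvent this with a computable exponent $P = P(\rk F)$ such that $\phi^P$ \emph{fixes} every $\phi$--periodic free factor rather than merely permuting it; such a $P$ is furnished by a bounded power of $\phi$ (for instance a rotationless power), whose exponent is bounded in terms of $\rk(F)$ alone. With $P$ in hand the test reduces to a single decidable question: form the automorphism $\phi^P$, apply it to a basis of $A$ to obtain the subgroup $\phi^P(A)$, and decide whether $\phi^P(A)$ is conjugate to $A$ in $F$ --- for example by folding and comparing Stallings cores. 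Then $[A]$ is $\phi$--periodic precisely when this conjugacy test succeeds, and assembling these facts yields a halting procedure. The crux of the argument is thus the period bound $P$ together with the computability of $N$; granting these, the remaining decidability inputs combine directly into the desired algorithm.
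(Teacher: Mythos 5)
Your overall architecture --- pass to a computable power so that ``periodic'' becomes ``invariant,'' run one process for rank--1 periodic factors and a list--and--check process driven by the volume bound of Theorem~\ref{thm:main} --- is the paper's, and your Process II is essentially identical to the paper's: compute $N = C^{|\phi|_\calS}$, enumerate the finitely many labeled core graphs with at most $N$ edges, filter for proper free factors via Whitehead, and test each for invariance under the powered automorphism by comparing Stallings cores. You are also right that the periodicity test must be converted in advance into a fixedness test via an exponent $Q$ depending only on $\rk(F)$ (this is the Feighn--Handel / Handel--Mosher input the paper invokes as a preliminary step), since blind iteration has no stopping rule.

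The gap is in Process I. Theorem~\ref{thm:main} provides no volume bound in case (ii), so periodic rank--1 free factors cannot be enumerated the way the rank--$\geq 2$ factors are, and there are infinitely many primitive conjugacy classes. The tools you cite --- Whitehead's algorithm, and deciding whether a \emph{given} cyclic word is $\phi^Q$--fixed --- answer the question for one candidate at a time; they do not tell you which candidates to test, and you cannot test them all. What is missing is an effective procedure that outputs a \emph{finite} list guaranteed to contain a fixed primitive conjugacy class whenever one exists. The paper supplies this by first running Bestvina--Handel to produce a (relative) train track representative $f\from \Gamma \to \Gamma$ (detecting along the way any invariant subgraph, hence an invariant free factor), and then applying Turner's algorithm, which computes a finite core $C_f$ carrying the fixed subgroups of all automorphisms in the outer class of $\phi$; a $\phi$--fixed conjugacy class is precisely one meeting such a fixed subgroup, so only the finitely many rank--1 components of $C_f$ need to be checked for primitivity with Whitehead's algorithm. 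Without this step, or some substitute producing the finite candidate list for case (ii), your Process I is not yet an algorithm, and the claimed correctness argument (``a negative answer from both processes forces alternative (i)'') does not go through.
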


%%%%%%%%%%%%%%%%%%%%% SECTION sec:listandcheck %%%%%%%%%%%%%%%%%%%%%%

\section{List and check algorithm}\label{sec:listandcheck}

The input of our algorithm is an element $\phi_0 \in \Out(F)$. Recall
that $\phi_0 \in \Out(F)$ is not fully irreducible if there exists a
periodic proper free factor, and note that the periodic free factors
of $\phi_0$ are exactly the periodic free factors of each of its
powers.  Feighn and Handel \cite{ar:Feighn-Handel} showed that there
is a power $Q$, depending on the rank of $F$ but not on the element
$\phi_0$, so that any periodic free factor of $\phi_0^Q$ is in fact
invariant.  An explicit function for $Q$ depending only on $\rk(F)$ can be found in \cite{un:HM-II} and \cite{un:FH14}.  For instance, Handel--Mosher show that this property is shared by all elements in $\mathrm{ker}(\Out(F) \to GL(\rk(F),\BZ_3))$ and hence $Q = \prod_{j=1}^{\rk(F)} (3^{\rk(F)} - 3^{j-1})$ suffices.  This is analogous to the fact
that the mapping class group has a finite index subgroup all of whose
elements are \emph{pure}; i.e., any invariant multicurve is curve-wise
fixed. As a preliminary step to our algorithm, we replace the element
$\phi_0$ by $\phi = \phi_0^Q$, so that henceforth we need only look
for invariant free factors.  Note that $\phi$ is irreducible if and
only if it is fully irreducible if and only if $\phi_0$ is fully
irreducible.

\medskip 

\noindent {\bf Process I.} To begin process I, we apply an effective algorithm due to Bestvina and Handel \cite{ar:Bestvina-Handel92} which finds a \emph{relative train track}\footnote{Loosely speaking, a relative train track representative is akin to a Jordan form for a linear transformation; we will not make use of any properties of relative train track representatives and refer the reader to the references for details.} representative $f\from \Gamma \to \Gamma$ of
$\phi$. At its conclusion, if $\Gamma$ has a nontrivial $f$--invariant
subgraph, then $\phi$ fixes a proper free factor and is therefore
reducible. Otherwise, the algorithm gives us an honest train track map
representing $\phi$. Recall that Bestvina and Handel
\cite{ar:Bestvina-Handel92} proved that the fixed subgroup of an
automorphism whose outer class is irreducible is at most rank 1. Thus we next want to check for loops homotopically fixed by $f$, which
correspond to a fixed conjugacy classes of $\phi$, and then see whether
their corresponding elements generate a higher rank subgroup of $F$.

For this, we make use of an algorithm of Turner in \cite{ar:Turner}. For an outer automorphism $\phi$ with train track map $f\from \Gamma \to \Gamma$, Turner begins 
by describing a graph $D_f$ equipped with a graph map $D_f \to \Gamma$. 
The components of $D_f$ are in one-to-one correspondence with the fixed 
subgroups of the automorphisms in the outer class of $\phi$, so that, restricted to a component 
of $D_f$, the map $D_f \to \Gamma$ is the covering map corresponding 
to the fixed subgroup of one of the elements of the outer class of $\phi$. The algorithm 
provides an effective procedure for obtaining a finite subgraph $C_f$ of $D_f$ 
that carries the fundamental group of $D_f$. If any component of $C_f$ has 
rank greater than 1, then $\phi$ is reducible. Otherwise, Whitehead's algorithm 
provides an effective method for determining whether any component of 
$C_f$ corresponds to a primitive element. If one does, then $\phi$ is cyclically reducible. This marks the end of process I. At this point, we stop if we have 
found that $\phi$ is reducible, and we continue to process II if we have only 
managed to determine that $\phi$ is noncyclically reducible or fully irreducible. 

\medskip

\noindent {\bf Process II.} Theorem~\ref{thm:main} gives
an upper bound $V = C^{|\phi|_\calS}$ on the volume of the smallest
$\phi$--invariant free factor, if $\phi$ is noncyclically
reducible. (Recall, we have replaced our original input $\phi_0$ by
$\phi = \phi_0^Q$ for which invariance and periodicity are the same.)
There are a finite number of conjugacy classes of subgroups $H$ with
volume less than this bound, and these can be systematically listed,
since they correspond to core graphs made from at most $V$ edges,
where each edge is oriented and labeled by an element of $\calX$. For
a gross overestimate of the number of these, one has $V \cdot
(2\rk(F))^V\cdot B_{2V}$, where $B_n$, known as the $n$th \emph{Bell
  number}, counts the number of partitions of $n$ objects. In our case
this is equivalent to the number of ways one can glue the $2V$
endpoints of $V$ edges to obtain a graph. In particular, the number of
conjugacy classes is less than $V(8V^2\rk(F))^{V}$ as $B_{2V} \leq
(2V)^{2V}$.  Whitehead's algorithm is then used to eliminate conjugacy
classes which are not free factors. We obtain a \textbf{list} of
conjugacy classes of free factors that are \textbf{checked} (using,
say, Stallings's graph pull backs \cite{ar:Stallings}) one-by-one for
$\phi$--invariance. This process, and hence the algorithm, stops once
either an invariant free factor is identified, concluding with $\phi$
reducible, or once every item on the list is checked and found not to
be invariant, determining that $\phi$ is fully irreducible.

\medskip

This completes the proof of Theorem \ref{thm:iwip-algorithm}, with the
assumption of Theorem \ref{thm:main}. Now we proceed with the proof of
Theorem \ref{thm:main}.

%%%%%%%%%%%%%%%%%%%%% SECTION sec:outerspace %%%%%%%%%%%%%%%%%%%%%%

\section{Outer Space, Trees, and Morphisms}\label{sec:outerspace}

For mapping class groups, the intersection number between curves on
the surface is in various contexts useful in comparison to distances
in, for instance, Teichm\"uller space or the complex of curves.
Similar methods have been emerging for $\Out(F)$ and its associated
spaces. Culler and Vogtmann's \emph{outer space} is the space $cv$
consisting of metric simplicial trees $T$ equipped with simplicial, free $F$--actions that are \emph{minimal} (meaning they leave no proper subtree invariant), up to isometry which commutes with the action. The action of $\Out(F)$ on $cv$ is defined by pre-composing
the free group action with the outer automorphism; this action is
therefore on the right.  In some contexts, it is convenient to
consider the projectivized outer space $CV$ in which the sum of the
lengths of the edges of the quotient $T/F$ is 1.  Outer space is
treated as the analogue for $\Out(F)$ of Teichm\"uller space; we refer
the reader to Vogtmann's survey \cite{ar:Vogtmann} for a more detailed
description.

For a tree $T \in cv$, we use $d_T(\cdot,\cdot)$ to denote the
metric on $T$, $\ell_T(\cdot)$ to denote the length of edges or paths,
and $\calE(T)$ to denote the set of edges.  We may consider edges oriented, depending on the context.  In the special case that $T$ has a single vertex orbit and unit length on every edge, we call $T$ a \emph{unit rose}.

Any pair of unit roses $S, T$ are related by a \emph{morphism} $f \from S \to T$, by which we mean a cellular $F$--equivariant map that linearly expands every edge of $S$ over a non-backtracking edge path of $T$.  The \emph{length of a morphism} $f \from S\to T$ is
\[\ell(f) = \max \{ \ell_T(f(s)) \mid s \in \calE(S) \},\]
and the \emph{length of $S$ in $T$} is
\[\ell_T(S) = \min \{ \ell(f) \mid f\from S \to T \mbox{ is a morphism} \}.\]
We use $\ell_T(S)$ instead of Lipschitz distance to simplify computations in the next section, but we remark that the two values are easily related by \cite[Lemma~2.4]{un:Horbez}.  If $f \from S \to T$ satisfies $\ell(f) = \ell_T(S)$, we say $f$ is \emph{length minimizing}.  In general, $\ell_T(S)$ and $\ell_S(T)$ are not equal, but it is known that the ratio of their logarithms is bounded away from zero, independently of $S$ and $T$, when both trees are unit roses (or more generally, in the ``thick part'' of $cv$) \cite{ar:A-KB12,ar:HM07}.  We do not require this fact in what
follows.  Instead, it is convenient to define
\[\lambda(S,T) = \max\{ \ell_T(S),\ell_S(T) \}.\]

%%%%%%%%%%%%%%%%%%%%% SECTION sec:intersection %%%%%%%%%%%%%%%%%%%%%%

\section{Intersection and the Guirardel Core}\label{sec:intersection}

The utility of the intersection number between curves on a surface is carried over to free groups via the so-called \emph{Guirardel core} $\calC(S \times T)$: a certain closed, $F$--invariant (with the diagonal action) cellular subset of the product $S \times T$ of trees in $S, T \in cv$. The \emph{intersection number} $i(S,T)$ is the covolume of $\calC(S \times T)$, that is, the sum of the areas of the 2-cells in $\calC(S \times T)/F$.  Often we may assume $S$ and $T$ are unit roses, in which case $i(S,T)$ simply counts the squares in $\calC(S \times T)/F$.

For our purpose, we do not need the full definition of $\calC(S \times T)$, for which we refer the reader to~\cite{ar:Guirardel,un:Horbez}.  Rather, we make use of two approaches to computing $i(S,T)$.  In one of these, intersection numbers are interpreted as the geometric intersection between sphere systems in the doubled handlebody. This connection is recalled in the proof of Lemma~\ref{lem:vol-upperbound}, where it is used.

The other approach is a simple criterion, given by Behrstock, Bestvina and the first author~\cite{ar:Behrstock-Bestvina-Clay}, for when two edges $s \in \calE(S)$, $t \in \calE(T)$ determine a square $s \times t$ in the core $\calC(S \times T)$.  For a tree $T \in \cv$, we let $\bd T$ denote its boundary; that is, equivalence classes of geodesic rays where two rays are equivalent if their images lie in a bounded neighborhood of one another.  An oriented
edge $t \in \calE(T)$ determines a subset $\Cylo^+_T(t) \subset \bd T$, its \emph{(forward) one-sided cylinder}, which consists of equivalence classes of geodesics that contain a representative whose image contains $t$ with the correct orientation. The complement of $\Cylo^+_T(t)$ in $\bd T$ will be denoted by $\Cylo^-_T(t)$; clearly $\Cylo^-_T(t) = \Cylo^+_T(\bar{t})$, where $\bar{t}$ is $t$ with the reverse orientation. We will typically not bother with specifying an orientation as we will consider both one-sided cylinders simultaneously.  For $S,T \in cv,$ there exists a canonical $F$-equivariant homeomorphism $\bd \from \bd S \to \bd T$, which is induced by any morphism $f \from S \to T$. 

\begin{lemma}[{\cite[Lemma~2.3]{ar:Behrstock-Bestvina-Clay}}]\label{lem:core-criteria}
  Let $S,T \in \cv$ and let $\bd \from \bd S \to \bd T$ denote the
  canonical $F$--equivariant homeomorphism.  Given two edges $s \in
  \calE(S)$ and $t \in \calE(T)$, the rectangle $s \times t$ is in the
  core $\calC(S \times T)$ if and only if each of the four subsets
  $\bd(\Cylo^{(\pm)}_S(s)) \cap \Cylo^{(\pm)}_T(t)$ is nonempty.
\end{lemma}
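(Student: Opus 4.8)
The plan is to unwind Guirardel's definition of the core in terms of quadrants and to match these quadrants with the one-sided cylinders. Recall that an edge $s \in \calE(S)$ has a midpoint whose removal disconnects $S$ into two half-trees; call them $\mathfrak{H}^{+}_S(s)$ and $\mathfrak{H}^{-}_S(s)$, indexed so that the rays eventually contained in $\mathfrak{H}^{\pm}_S(s)$ have boundary $\Cylo^{\pm}_S(s) \subset \bd S$. Likewise $t \in \calE(T)$ determines half-trees $\mathfrak{H}^{\pm}_T(t)$ with boundaries $\Cylo^{\pm}_T(t) \subset \bd T$. A \emph{quadrant} based at the pair $(s,t)$ is a product $\mathfrak{H}^{\epsilon}_S(s) \times \mathfrak{H}^{\delta}_T(t)$ for a choice of signs $\epsilon, \delta \in \{+,-\}$; there are four of them. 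Guirardel's core is obtained by removing the \emph{light} quadrants, a quadrant being light exactly when the corresponding subset of $\bd F$ carries no boundary measure.

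First I would record the identification of the boundaries. For a free simplicial action the orbit map extends to an $F$--equivariant homeomorphism of $\bd F$ onto $\bd S$, and similarly onto $\bd T$; composing gives the canonical homeomorphism $\bd \from \bd S \to \bd T$ named in the statement. Under these identifications each half-tree boundary $\Cylo^{\pm}_S(s)$ and $\Cylo^{\pm}_T(t)$ is a clopen subset of the Cantor set $\bd F$, and any fully supported $F$--quasi-invariant boundary measure assigns positive mass to every nonempty clopen set. Consequently a quadrant $\mathfrak{H}^{\epsilon}_S(s) \times \mathfrak{H}^{\delta}_T(t)$ is light if and only if the intersection $\bd(\Cylo^{\epsilon}_S(s)) \cap \Cylo^{\delta}_T(t)$ is empty; the measure-theoretic and the combinatorial notions of heaviness coincide precisely because the cylinders are clopen.

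Next I would invoke Guirardel's pointwise characterization of the core: a point $(x,y) \in S \times T$ lies in $\calC(S \times T)$ if and only if every quadrant containing it is heavy, where the quadrants containing $(x,y)$ are the products of a half-tree determined by a direction at $x$ with a half-tree determined by a direction at $y$. For $(x,y)$ in the interior of the square $s \times t$, the directions at $x$ are the two orientations of $s$ and the directions at $y$ are the two orientations of $t$; the resulting half-trees have boundaries $\Cylo^{\pm}_S(s)$ and $\Cylo^{\pm}_T(t)$, independent of the chosen interior point. Since the core is an $F$--invariant union of closed cells of $S \times T$, the open square lies entirely in $\calC(S \times T)$ or is disjoint from it, and by the characterization this is decided by whether all four quadrants based at $(s,t)$ are heavy.

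Combining the two reductions yields the claim: the square $s \times t$ lies in $\calC(S \times T)$ iff none of its four quadrants is light iff each of the four sets $\bd(\Cylo^{\pm}_S(s)) \cap \Cylo^{\pm}_T(t)$ is nonempty. The step requiring the most care is the equivalence of the two notions of heaviness in the second paragraph: one must confirm that the boundary measure has full support, so that ``positive measure'' and ``nonempty'' agree for the clopen cylinders, and that the homeomorphism $\bd$ genuinely carries $\Cylo^{\pm}_S(s)$ onto the subset of $\bd T$ appearing in the intersection. Once this dictionary between quadrants and cylinders is fixed, the remainder is a direct unwinding of Guirardel's definition.
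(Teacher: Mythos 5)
The paper does not actually prove this lemma; it quotes it verbatim from Behrstock--Bestvina--Clay, so the only thing to compare against is the argument in that source and Guirardel's underlying definitions. Your skeleton --- reduce to the four quadrants based at the pair $(s,t)$, translate ``light/heavy'' into a condition on the boundary, and use the canonical $F$--equivariant identification $\bd S \cong \bd F \cong \bd T$ --- is the right one and matches the cited proof in outline.

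The genuine gap is in your second paragraph. Guirardel does not define a light quadrant by reference to a boundary measure: a quadrant $Q$ is \emph{heavy} if there is a sequence $(g_n)$ in $F$ with $g_n\cdot(p_0,q_0) \in Q$ for a fixed basepoint $(p_0,q_0)$ and with $d_S(p_0,g_np_0)\to\infty$ and $d_T(q_0,g_nq_0)\to\infty$; it is light otherwise. The equivalence of this dynamical condition with nonemptiness of $\bd(\Cylo^{\epsilon}_S(s)) \cap \Cylo^{\delta}_T(t)$ is precisely the content of the lemma, so appealing to a ``fully supported $F$--quasi-invariant boundary measure'' substitutes an unstated (and never defined) criterion for the one that must be verified --- the argument begs the question at exactly the step you flag as requiring the most care. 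What is actually needed is: in the forward direction, the points $g_np_0$ all lie in the relevant half-tree of $S$ and leave every bounded set, so a subsequence converges to some $\xi \in \Cylo^{\epsilon}_S(s)$; since the orbit maps $F \to S$ and $F \to T$ are equivariant quasi-isometries inducing the same boundary, the corresponding subsequence of $g_nq_0$ converges to $\bd(\xi) \in \Cylo^{\delta}_T(t)$, so the intersection is nonempty. Conversely, given $\xi$ in the intersection, any sequence $g_n \to \xi$ in $F \cup \bd F$ witnesses heaviness, because each half-tree together with its cylinder is a neighborhood of $\xi$. A second, smaller omission: the core removes \emph{all} light quadrants, not only those based at the midpoints of $s$ and $t$, so you must also observe that any quadrant containing an interior point of $s\times t$ contains one of the four quadrants based at $(s,t)$, and that a quadrant containing a heavy subquadrant is itself heavy; this monotonicity is what justifies checking only those four.
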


Let $S,T \in \cv$ and $t \in \calE(T)$. The \emph{slice} of the core
$\calC(S \times T)$ above $t$ is the set: 
\begin{equation*}
  \label{eq:slice}
  \calC_t = \{ s \in \calE(S) \mid s \times t \subset \calC(S \times
  T) \}.
\end{equation*}
Similarly define the slice $\calC_s = \{ t \in \calE(T) \mid s \times t
\subset \calC(S \times T) \}$ for $s \in \calE(S)$. A simple
application of Lemma~\ref{lem:core-criteria} can be used to describe
the slice.

\begin{lemma}[{\cite[Lemma~3.7]{ar:Behrstock-Bestvina-Clay}}]
\label{lem:span-slice}
Let $S,T \in \cv$ and suppose $f\from S \to T$ is a morphism. Given an
edge $t \in \calE(T)$ and a point $y$ in the interior of $t$, the
slice $\calC_t \subset S$ of the core $\calC(S \times T)$ is contained
in the subtree spanned by $f^{-1}(y)$.
\end{lemma}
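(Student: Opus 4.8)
The plan is to prove the containment at the level of edges: I will show that every edge $s \in \calC_t$ lies in the subtree $K \subset S$ spanned by $f^{-1}(y)$. The governing observation is the standard description of convex hulls in trees. Since $K$ is the convex hull of $f^{-1}(y)$, an open edge $s$ belongs to $K$ if and only if it separates two points of $f^{-1}(y)$; equivalently, the two open half-trees $S^+, S^-$ obtained by deleting an interior point of $s$ each meet $f^{-1}(y)$. Recalling that $\Cylo^{\pm}_S(s) = \bd S^{\pm}$ and that the interior point $y$ of $t$ separates $T$ into half-trees $T^+, T^-$ with $\Cylo^{\pm}_T(t) = \bd T^{\pm}$, my task reduces to producing a preimage of $y$ on each side of $s$, using only the hypothesis $s \in \calC_t$.

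First I would unpack this hypothesis through Lemma~\ref{lem:core-criteria}. Since $s \times t \subset \calC(S \times T)$, all four sets $\bd(\Cylo^{\pm}_S(s)) \cap \Cylo^{\pm}_T(t)$ are nonempty. Focusing on $S^-$, the conditions $\bd(\Cylo^-_S(s)) \cap \Cylo^+_T(t) \neq \emptyset$ and $\bd(\Cylo^-_S(s)) \cap \Cylo^-_T(t) \neq \emptyset$ furnish ends $\xi_1, \xi_2 \in \bd S^-$ with $\bd\xi_1 \in \bd T^+$ and $\bd\xi_2 \in \bd T^-$. As $\bd T^+$ and $\bd T^-$ are disjoint and $\bd$ is injective, $\xi_1 \neq \xi_2$, so the bi-infinite geodesic $\gamma \subset S$ joining them is genuine; since $S^-$ is convex and contains both ends, $\gamma \subseteq S^-$. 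Now I would invoke that $\bd$ is induced by the morphism $f$: writing $\gamma$ as a union of two rays from a common basepoint, their $f$-images accumulate at $\bd\xi_1$ and $\bd\xi_2$, so $f(\gamma)$ is a connected subset of $T$ penetrating arbitrarily deeply toward both $\bd T^+$ and $\bd T^-$. Because $y$ separates these two ends, $f(\gamma)$ must contain $y$, whence some $p \in \gamma \subseteq S^-$ satisfies $f(p) = y$; thus $S^-$ meets $f^{-1}(y)$. The identical argument applied to $S^+$, using the remaining two nonempty intersections, places a preimage of $y$ in $S^+$. Hence both half-trees meet $f^{-1}(y)$ and $s \subset K$, as desired.

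The step I expect to require the most care is the passage from ``the ends of $\gamma$ are separated by $y$'' to ``$y \in f(\gamma)$''. Here I rely on the fact, recalled just before Lemma~\ref{lem:core-criteria}, that the canonical homeomorphism $\bd \from \bd S \to \bd T$ is induced by $f$, which I interpret concretely as: the $f$-image of a ray $[o,\xi)$ in $S$ accumulates exactly at $\bd\xi \in \bd T$. Granting this, each of the two rays comprising $\gamma$ has image reaching arbitrarily far into one of the complementary half-trees $T^{\pm}$, and connectivity of the image in the tree $T$ forces it through the separating point $y$. Note that the preimages $p$ produced lie strictly inside the open half-trees, so they genuinely certify that $s$ separates $f^{-1}(y)$. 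No properties of morphisms beyond the induced boundary correspondence and the convexity of half-trees are needed.
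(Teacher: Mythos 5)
The paper does not actually prove this lemma — it is quoted from Behrstock--Bestvina--Clay \cite[Lemma~3.7]{ar:Behrstock-Bestvina-Clay} — but your argument is correct and is essentially the proof from that reference: deduce from the square criterion (Lemma~\ref{lem:core-criteria}) that each side of $s$ carries ends mapping to both sides of $y$, and use that $f$ induces the boundary homeomorphism to force a preimage of $y$ on each side. One point worth making explicit: the preimages $p^{\pm}$ you produce lie on bi-infinite geodesics joining ends of the one-sided cylinders, hence in the two components of $S$ minus the \emph{open} edge $s$ (not merely the half-trees at a deleted interior point), and it is this that guarantees the entire closed edge $s$ lies in the subtree spanned by $f^{-1}(y)$.
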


As $F$ acts freely on the edges of $T$, for any point $y$ that is in
the interior of $t$, the subtree $\calC_t \times \{y\}$ embeds in the
quotient $\calC(S \times T)/F$. Similarly, for a point $x$ in the
interior of $s$, the subtree $\{x\} \times \calC_s$ embeds in the
quotient. Therefore, the intersection number $i(S,T)$ can be expressed
as: \begin{equation}
  \label{eq:intersection}
  i(S,T) = \sum_{e \in \calE(T/F)} \ell_T(\te)\vol(\calC_\te) =
  \sum_{e \in \calE(S/F)} \ell_S(\te)\vol(\calC_\te).
\end{equation}
where by $\te$ we denote any lift of the edge $e$ to $T$ or $S$
respectively and by $\vol(\cdot)$ we denote the sum of the lengths of
the edges in the respective slice.

As mentioned in Section~\ref{sec:results}, Horbez \cite{un:Horbez} has
recently given, for two trees in $cv$, a bound on their Guirardel
intersection number based on $F$--equivariant maps between them, which
in turn we can relate to the geometry of $\Out(F)$.  We require a more
precise formulation of his result than what is stated in \cite{un:Horbez}, and we need only consider intersection between unit roses:

\begin{theorem}[{Horbez~\cite{un:Horbez}}]\label{thm:Horbez}
  Let $S,T \in cv$ be unit roses.  Then
  \begin{equation*}
    %\label{eq:Horbez}
    i(S,T) \leq 2\rk(F)^3\lambda(S,T)^4.
  \end{equation*}
\end{theorem}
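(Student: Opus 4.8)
The plan is to combine the covolume formula \eqref{eq:intersection} with the slice description of Lemma~\ref{lem:span-slice}, bounding a single slice and then summing. Since $S$ and $T$ are unit roses, every edge has length one and $T/F$ is a wedge of $\rk(F)$ loops, so \eqref{eq:intersection} collapses to $i(S,T) = \sum_{e \in \calE(T/F)} \vol(\calC_{\te})$, a sum of $\rk(F)$ terms. It therefore suffices to bound $\vol(\calC_t)$ for a single lift $t$ of an edge $e$, and the theorem follows by multiplying by $\rk(F)$.

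Fix length-minimizing morphisms $f \from S \to T$ and $g \from T \to S$, so that $\ell(f) = \ell_T(S) \le \lambda(S,T)$ and $\ell(g) = \ell_S(T) \le \lambda(S,T)$, and choose $y$ in the interior of $t$. By Lemma~\ref{lem:span-slice}, $\calC_t$ is contained in the subtree $Y \subset S$ spanned by the preimage $f^{-1}(y)$; since the slice embeds in $\calC(S \times T)/F$ and every edge of $S$ has length one, $\vol(\calC_t)$ is at most the number of edges of $Y$. I would bound this quantity by the product of the cardinality $N = |f^{-1}(y)|$ and the diameter of $f^{-1}(y)$ in $S$: because $Y$ is the convex hull of $N$ points, fixing a basepoint $p_0 \in f^{-1}(y)$ gives $Y \subseteq \bigcup_{p} [p_0,p]$, whence $\vol(Y) \le N \cdot \mathrm{diam}(f^{-1}(y))$.

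For the count, I pass to the quotient $\bar f \from S/F \to T/F$ and write $\bar y$ for the image of $y$. Since $F$ acts freely on edges, each point of $\bar f^{-1}(\bar y)$ lifts to exactly one point of $f^{-1}(y)$, so $N = |\bar f^{-1}(\bar y)|$. Each of the $\rk(F)$ petals of $S/F$ maps under $\bar f$ to an edge path in $T/F$ of length at most $\ell(f) \le \lambda(S,T)$, which passes through the interior point $\bar y$ of $e$ at most $\lambda(S,T)$ times; summing over petals yields $N \le \rk(F)\,\lambda(S,T)$.

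The main obstacle is the diameter bound, and here I would use the reverse morphism. The composition $g \circ f \from S \to S$ is an $F$--equivariant map that is homotopic to the identity (both $S$--trees carry the same marking) and Lipschitz with constant at most $\ell(f)\ell(g) \le \lambda(S,T)^2$. For any $p,q \in f^{-1}(y)$ we have $g(f(p)) = g(y) = g(f(q))$, so if $\Delta = \sup_{x} d_S(x, gf(x))$ then $d_S(p,q) \le d_S(p,gf(p)) + d_S(gf(q),q) \le 2\Delta$; thus $\mathrm{diam}(f^{-1}(y)) \le 2\Delta$ and $\vol(\calC_t) \le \vol(Y) \le 2N\Delta$. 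Everything then reduces to bounding the displacement $\Delta$ of an equivariant self-map homotopic to the identity on a unit rose, which is finite and controlled by Cooper's bounded cancellation lemma applied to $f$ and $g$, giving an estimate of the form $\Delta \le c\,\rk(F)\,\lambda(S,T)^{2}$. Inserting $N \le \rk(F)\lambda(S,T)$ and summing the $\rk(F)$ slices produces a bound of the shape $i(S,T) \le C_0\,\rk(F)^{3}\lambda(S,T)^{4}$; I expect the delicate part to be pinning down the bounded-cancellation constant precisely enough to recover the exponents and the explicit constant $2$ in the statement, since the coarse Lipschitz estimate for $\Delta$ must be refined by tracking the actual edge-path lengths of $f$ and $g$ rather than their Lipschitz constants.
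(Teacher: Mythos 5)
Your overall architecture is the same as the paper's: reduce via \eqref{eq:intersection} to bounding a single slice $\vol(\calC_t)$, use Lemma~\ref{lem:span-slice} to trap $\calC_t$ in the convex hull of $f^{-1}(y)$, bound $|f^{-1}(y)|$ by $\rk(F)\ell(f)$ exactly as the paper does, and control the diameter of $f^{-1}(y)$ by composing $f$ with a length-minimizing morphism $g\from T\to S$ in the reverse direction. (Your covering of the hull by $N$ geodesics from a basepoint is in fact slightly more efficient than the paper's covering by $\tfrac12 N^2$ pairwise geodesics, which is why your bookkeeping comes out a power of $\lambda$ better; that discrepancy is harmless.)

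The genuine gap is the displacement bound $\Delta=\sup_x d_S(x,gf(x))$, which you correctly identify as the crux but then outsource to ``Cooper's bounded cancellation lemma'' with an unspecified constant $c$ and an extraneous factor of $\rk(F)$. As written this does not yield the stated inequality, and bounded cancellation is not the mechanism the paper uses. The paper's Lemma~\ref{lem:inverse-basis} closes exactly this hole with an elementary argument: for a vertex $u$, choose $w\in F$ with $d_S(u,wu)=1$ whose axis meets the geodesic $[u,gf(u)]$ only at $u$; then $d_S(gf(u),wgf(u))=2d_S(u,gf(u))+1$ on one hand, while equivariance gives $d_S(gf(u),gf(wu))\leq \ell(g)\ell(f)$ on the other, so $2d_S(u,gf(u))\leq \ell(g)\ell(f)\leq\lambda(S,T)^2$ with no rank-dependence and no appeal to bounded cancellation. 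Feeding this into Lemma~\ref{lem:vanishing-path} gives $\mathrm{diam}\,f^{-1}(y)\leq 4\lambda(S,T)^2+2$ (and Corollary~\ref{co:slice-volume} trims the $+2$ for the slice itself), after which your summation recovers the theorem. So the proof is completable along exactly the lines you set up, but the one estimate you left as ``delicate'' is the only genuinely new content of the argument, and it needs the axis trick above rather than a bounded-cancellation constant.
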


For the remainder of this section, we derive the statement above by a variation on the arguments in \cite[Section~2.2]{un:Horbez}.  Using trees rather than marked graphs, and keeping track of the precise dependence on the $F$--equivariant maps, we obtain inequalities not stated directly in \cite{un:Horbez} but needed for our applications.

\begin{remark}\label{rem:unit-roses}
  If $f \from S \to T$ is a morphism between unit roses, then $f$ is a
  bijection between the vertices of $S$ and the vertices of $T$.
  Indeed, this follows as $F$ acts freely and transitively on the
  vertex sets.
\end{remark}

\begin{lemma}[cf.~Lemmas 2.3 and 2.5 in \cite{un:Horbez}]\label{lem:inverse-basis}
  Suppose that $S,T \in cv$ are unit roses and $f \from S \to T$ is a length minimizing morphism.  If $v_0, v_1$ are vertices of $T$, then there exist vertices $u_0, u_1$ of $S$ such that $f(u_0)=v_0, f(u_1)=v_1,$ and
\[d_S(u_0,u_1) \leq \lambda(S,T)d_T(v_0,v_1) + \lambda(S,T)^2.\] 
 \end{lemma}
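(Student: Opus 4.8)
The plan is to use the (unique) $f$--preimages of $v_0,v_1$ and to control their $S$--distance by pushing through a length minimizing morphism in the opposite direction. By Remark~\ref{rem:unit-roses}, $f$ restricts to a bijection on vertex sets, so there are unique vertices $u_0,u_1$ of $S$ with $f(u_i)=v_i$; existence is therefore immediate and the entire content of the statement is the metric estimate. I would next fix a length minimizing morphism $g\from T\to S$, so that $\ell(g)=\ell_S(T)\le\lambda(S,T)$ while $\ell(f)=\ell_T(S)\le\lambda(S,T)$, and $g$ is a vertex bijection as well.

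Write $\lambda=\lambda(S,T)$. The idea is to route from $u_0$ to $u_1$ through $g(v_0)=gf(u_0)$ and $g(v_1)=gf(u_1)$ and apply the triangle inequality
\[
  d_S(u_0,u_1)\le d_S\bigl(u_0,gf(u_0)\bigr)+d_S\bigl(gf(u_0),gf(u_1)\bigr)+d_S\bigl(gf(u_1),u_1\bigr).
\]
The middle term is the main contribution: the geodesic $[v_0,v_1]\subset T$ is a concatenation of $d_T(v_0,v_1)$ unit edges, each stretched by $g$ to a path of length at most $\ell(g)\le\lambda$, so $d_S(g(v_0),g(v_1))\le\lambda\, d_T(v_0,v_1)$. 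It then remains to bound each error term $d_S(u_i,gf(u_i))$, namely the displacement of a vertex under the $F$--equivariant self-map $gf\from S\to S$.

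The crux is that this displacement is both constant and small. Since $S$ is a unit rose its vertex set is a single free $F$--orbit, so fixing a base vertex $v_\ast$ and writing an arbitrary vertex as $w\cdot v_\ast$, $F$--equivariance together with the fact that $F$ acts by isometries gives $d_S(w v_\ast, gf(w v_\ast))=d_S(v_\ast,gf(v_\ast))=:D$, independent of the vertex. Writing $gf(v_\ast)=w_0\cdot v_\ast$ and identifying $S$ with the Cayley graph of $F$ on the basis $\calB_S$ dual to the rose, we have $D=|w_0|_{\calB_S}$. To bound $D$ I use that $gf$ stretches each edge of $S$ by at most $\ell(f)\ell(g)\le\lambda^2$: applied to the edge from $v_\ast$ to $b\cdot v_\ast$ with $b\in\calB_S$, this yields $|w_0^{-1}bw_0|_{\calB_S}=d_S(w_0 v_\ast, b w_0 v_\ast)\le\lambda^2$. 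Because $\rk(F)\ge 2$, I can choose a basis element $b$ whose underlying generator differs from that of the first letter of the reduced word $w_0$, so that $w_0^{-1}bw_0$ is already reduced of length $2|w_0|+1$; hence $2D+1\le\lambda^2$, in particular $2D\le\lambda^2$. Substituting into the triangle inequality gives $d_S(u_0,u_1)\le\lambda\, d_T(v_0,v_1)+2D\le\lambda\, d_T(v_0,v_1)+\lambda^2$, as desired.

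The step I expect to be the main obstacle is precisely this estimate on $D$: one must first recognize that the displacement of $gf$ is the same for every vertex (which hinges on the single free vertex orbit of a unit rose), and then run the elementary free--group computation showing that a short conjugate $w_0^{-1}bw_0$ forces $w_0$ itself to be short. This last point genuinely uses $\rk(F)\ge 2$, since in rank one conjugation is trivial and no such control is available. Everything else---the vertex bijection of Remark~\ref{rem:unit-roses}, the stretching estimates for $f$ and $g$, and the triangle inequality---is routine given the definitions of $\ell(\cdot)$ and $\lambda(S,T)$.
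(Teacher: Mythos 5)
Your proof is correct and follows essentially the same route as the paper: route $u_0$ to $u_1$ through $gf(u_0)$ and $gf(u_1)$ via the triangle inequality, bound the middle term by $\ell(g)\,d_T(v_0,v_1)$, and bound each displacement $d_S(u,gf(u))$ by $\tfrac{1}{2}\ell(g)\ell(f)$ using an element of translation length $1$ that is ``transverse'' to the displacement (which is where $\rk(F)\geq 2$ enters). The only cosmetic difference is that you carry out that last step as a reduced-word computation with the conjugate $w_0^{-1}bw_0$ in the Cayley graph, whereas the paper phrases the identical estimate geometrically via the axis of a basis element meeting the geodesic from $u$ to $gf(u)$ only at $u$.
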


 \begin{proof}
Since the existence of $u_0$ and $u_1$ is clear by Remark~\ref{rem:unit-roses}, we need only prove the inequality.  Let $g \from T \to S$ be a length minimizing morphism.  For any vertices $u, u'$ in $S$, it is clear by concatenating the images of edges that
   \[d_S(gf(u),gf(u'))\leq\ell(g)d_T(f(u),f(u'))\leq\ell(g)\ell(f)d_S(u,u').\]
Now consider the geodesic edge path $q$ from $u$ to $u'' = gf(u)$.  There is a $w \in F$ for which $d_S(u,wu) = 1$ and whose axis intersects $q$ only at $u$.  Thus $d_S(u'',wu'') = 2d_S(u,u'') + 1$.  On the other hand,
\[d_S(u'',wu'')=d_S(gf(u),wgf(u)) = d_S(gf(u),gf(wu)) \leq \ell(g)\ell(f),\]
by our first observation and our choice of $w$.  These last two statements together imply $2d_S(u,gf(u)) \leq \ell(g)\ell(f)$.  Since $u$ was arbitrary,
\begin{align*}
d_S(u_0,u_1) &\leq d_S(gf(u_0),gf(u_1)) + d_S(u_0,gf(u_0)) + d_S(u_1,gf(u_1)) \\
&\leq \ell(g)d_T(v_0,v_1) + \ell(g)\ell(f),
\end{align*}
from which the lemma follows.
 \end{proof}

\begin{lemma}[cf.~Proposition 2.8 in \cite{un:Horbez}]\label{lem:vanishing-path}
  Suppose $S,T \in cv$ are unit roses and that $f \from S \to T$ is a
  length minimizing morphism.  Given any edge $t \in \calE(T)$ and a
  point $y$ in the interior of $t$, for any $x,x' \in f^{-1}(y)$ we
  have
  \begin{equation*}
    %\label{eq:vanishing-path}
    d_S(x,x') \leq 4\lambda(S,T)^2 + 2.
  \end{equation*}
\end{lemma}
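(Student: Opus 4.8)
The plan is to bound the diameter in $S$ of the preimage $f^{-1}(y)$ of a single interior point $y \in t$. Since $f$ linearly expands edges of $S$ over non-backtracking edge paths of $T$, the preimage $f^{-1}(y)$ meets the interior of only finitely many edges of $S$, and on each such edge $s$ it is a single point (as $f$ is injective on each edge). So the task reduces to controlling how far apart two such points $x, x' \in f^{-1}(y)$ can be. First I would note that $x$ lies in the interior of some edge $s$ of $S$ and $x'$ in the interior of some edge $s'$, so that $d_S(x,x') \leq d_S(u_0, u_1) + 2$, where $u_0, u_1$ are suitably chosen endpoints of $s, s'$ (the ``$+2$'' absorbing the at-most-unit contributions from traveling within $s$ and $s'$ to their vertices).

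The key step is to choose these endpoints so that their $f$--images are close together in $T$, and then invoke Lemma~\ref{lem:inverse-basis}. Since $f(x) = f(x') = y$ lies in the interior of the single edge $t$, the endpoints of the edges $s, s'$ carrying $x, x'$ must map to the two endpoints of $t$. Concretely, each of $s, s'$ maps onto an edge path through $t$, so each has an endpoint mapping to an endpoint of $t$; I would select vertices $u_0$ (an endpoint of $s$) and $u_1$ (an endpoint of $s'$) whose images $v_0 = f(u_0)$ and $v_1 = f(u_1)$ are both endpoints of the \emph{same} edge $t$, hence $d_T(v_0, v_1) = \ell_T(t) = 1$, using that $T$ is a unit rose.

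Applying Lemma~\ref{lem:inverse-basis} to this pair then yields
\[
d_S(u_0, u_1) \leq \lambda(S,T) \cdot 1 + \lambda(S,T)^2 \leq 2\lambda(S,T)^2,
\]
where the last step uses $\lambda(S,T) \geq 1$ (which holds since a length-minimizing morphism between unit roses must expand by at least the unit length, so $\ell_T(S), \ell_S(T) \geq 1$). Combining with $d_S(x,x') \leq d_S(u_0,u_1) + 2$ would give a bound of the form $2\lambda(S,T)^2 + 2$. The target bound is $4\lambda(S,T)^2 + 2$, so the plan leaves a comfortable factor-of-two slack; I expect this slack is exactly what is needed to handle the bookkeeping honestly.

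The main obstacle is precisely this endpoint-selection bookkeeping: verifying that for \emph{any} two points of $f^{-1}(y)$ one can name endpoints $u_0, u_1$ of the respective carrying edges whose images are the endpoints of a single edge $t$ of $T$. One subtlety is that the edge $s$ carrying $x$ could map over a long path of $T$ containing $t$ somewhere in its interior, so the ``nearest'' vertex of $s$ need not map to an endpoint of $t$ directly; one instead tracks back along $f(s)$ to the endpoints of the specific occurrence of $t$ in that path. Because $f(s)$ is non-backtracking and $y$ is in the interior of $t$, the two endpoints of that occurrence are the two endpoints of $t$ itself, and pulling back the one closer to $x$ along $s$ keeps the extra distance in $S$ at most one edge. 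Handling this carefully — and accounting for the possibility that $s$ traverses $t$ with either orientation — is the delicate part, but it only affects the additive constants, and the factor-of-two margin in the stated bound should absorb it.
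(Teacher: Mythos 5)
There is a genuine gap at the endpoint-selection step, and it is not just bookkeeping. Your plan requires vertices $u_0$ of $s$ and $u_1$ of $s'$ whose images are endpoints of $t$, so that $d_T(v_0,v_1)=1$. But $f$ sends the endpoints of $s$ to the endpoints of the whole edge path $f(s)$, and $t$ typically sits in the \emph{interior} of that path; so no endpoint of $s$ need map to an endpoint of $t$ (e.g.\ if $f(s)$ crosses three edges with $t$ in the middle). You acknowledge this and propose to pull back the relevant occurrence of $t$ along $s$, but the resulting points are interior points of $s$ and $s'$, and Lemma~\ref{lem:inverse-basis} says nothing about them: by Remark~\ref{rem:unit-roses} each vertex of $T$ has a \emph{unique vertex} preimage in $S$, and that is the point the lemma controls. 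The unique vertex preimage of an endpoint of $t$ has no reason to lie on or near $s$ or $s'$, and bounding its distance to the interior point of $s$ mapping to the same vertex is an instance of exactly the kind of preimage-diameter statement you are trying to prove --- so the repair is essentially circular.

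The paper's proof goes the other way around: it takes $u_1$ to be an honest endpoint of $s$ (hence a vertex within distance $1$ of $x$) and accepts that $v_1=f(u_1)$ may be far from the chosen endpoint $v_0$ of $t$; since $f(s)$ is a non-backtracking path containing both $v_0$ and $v_1$, one still gets $d_T(v_0,v_1)\leq\ell(f)\leq\lambda(S,T)$, and Lemma~\ref{lem:inverse-basis} then gives $d_S(u_0,u_1)\leq\lambda(S,T)\ell(f)+\lambda(S,T)^2\leq 2\lambda(S,T)^2$, where $u_0$ is the unique vertex preimage of $v_0$ serving as a common reference point for both $x$ and $x'$. This is precisely where the ``extra'' factor you interpreted as slack comes from: the bound $4\lambda(S,T)^2+2$ is $2\bigl(2\lambda(S,T)^2+1\bigr)$, obtained by the triangle inequality through $u_0$, not a cushion for additive constants. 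To fix your write-up, replace the requirement $d_T(v_0,v_1)=1$ with $d_T(v_0,v_1)\leq\ell(f)$ via the geodesic $f(s)$, and route both $x$ and $x'$ through the single vertex $u_0$.
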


\begin{proof}
Fix $v_0$ a vertex on one end of $t$, and let $u_0$ be the vertex of $S$ such that $f(u_0)=v_0$.  Consider $x \in f^{-1}(y)$.  Let $s \in \calE(S)$ be the edge that contains $x$.  Let $u_1$ be a vertex on one end of $s$, and set $v_1 = f(u_1)$.  Observe that $f(s)$ is a geodesic of length at most $\ell(f)$.  Because $f(s)$ contains both $v_0$ and $v_1$, $d_T(v_0,v_1)\leq \ell(f)$.  By Lemma \ref{lem:inverse-basis},
\[d_S(u_0,u_1) \leq \lambda(S,T)\ell(f) + \lambda(S,T)^2 \leq 2\lambda(S,T)^2.\]
Because $d_S(u_1,x)\leq 1$, $d_S(u_0,x)\leq 2\lambda(S,T)^2 + 1$.  The same is true replacing $x$ with $x'$, hence the conclusion.
\end{proof}

\begin{corollary}\label{co:slice-volume}
  Suppose $S,T \in cv$ are unit roses. Given an edge $t \in \calE(T)$,
  the diameter of the slice $\calC_t \subset S$ of the core $\calC(S
  \times T)$ is at most $4\lambda(S,T)^2$.
\end{corollary}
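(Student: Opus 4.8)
The plan is to combine the containment from Lemma~\ref{lem:span-slice} with the distance estimate underlying Lemma~\ref{lem:vanishing-path}, taking care to exploit integrality so as to obtain exactly $4\lambda(S,T)^2$ rather than the $4\lambda(S,T)^2+2$ that Lemma~\ref{lem:vanishing-path} gives verbatim. I would fix a length minimizing morphism $f\from S\to T$, a point $y$ in the interior of $t$, an endpoint $v_0$ of $t$, and let $u_0$ be the unique vertex of $S$ with $f(u_0)=v_0$ (Remark~\ref{rem:unit-roses}). By Lemma~\ref{lem:span-slice}, $\calC_t$ is contained in the subtree $H$ spanned by $f^{-1}(y)$, so it suffices to bound distances within $H$.

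First I would recover from the proof of Lemma~\ref{lem:vanishing-path} the one-sided estimate rather than the symmetric one: for each $x\in f^{-1}(y)$ there is a vertex $u_1$ of the edge containing $x$ with $d_S(u_0,u_1)\leq 2\lambda(S,T)^2$, and since $y$ is interior to $t$ the point $x$ is interior to its edge, so $d_S(u_0,x)<2\lambda(S,T)^2+1$ strictly. Thus every point of $f^{-1}(y)$ lies at distance strictly less than $2\lambda(S,T)^2+1$ from $u_0$; as balls in a tree are convex, the same holds for every point of $H$.

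The key step is then to pass from points to vertices. Because $S$ is a unit rose, $\lambda(S,T)$ is a positive integer (morphisms between unit roses have integer length), so $2\lambda(S,T)^2$ is an integer; hence any vertex $w$ of $S$ lying in $H$ satisfies $d_S(u_0,w)<2\lambda(S,T)^2+1$, and integrality forces $d_S(u_0,w)\leq 2\lambda(S,T)^2$. Finally, since $\calC_t$ is a union of unit edges, its diameter is realized between two of its vertices $w_1,w_2$, each an endpoint of a slice edge and hence lying in $H$; the triangle inequality through $u_0$ then yields $d_S(w_1,w_2)\leq 4\lambda(S,T)^2$.

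The main obstacle I anticipate is exactly the sharp constant: applying Lemma~\ref{lem:vanishing-path} directly to the diameter of $H$ only gives $4\lambda(S,T)^2+2$, which is already an integer, so no rounding helps at that level. The gain must come earlier, by measuring from the single vertex $u_0$ with the \emph{strict} per-point bound $d_S(u_0,x)<2\lambda(S,T)^2+1$ and rounding each of the two radii down to $2\lambda(S,T)^2$ before adding. Two small points need checking to make this legitimate: that $f^{-1}(y)$ is finite, so that the supremum of $d_S(u_0,\cdot)$ over it is attained and remains strictly below $2\lambda(S,T)^2+1$ (it is, being contained in a ball of a locally finite tree), and that the diameter of the union of unit edges forming $\calC_t$ is genuinely attained at vertices (it is, since moving a point to the far endpoint of its edge never decreases its distance to a fixed point).
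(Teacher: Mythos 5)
Your proof is correct and follows essentially the same route as the paper's: confine the slice to the subtree spanned by $f^{-1}(y)$ via Lemma~\ref{lem:span-slice}, invoke the distance estimates from (the proof of) Lemma~\ref{lem:vanishing-path}, and use the fact that the slice is a union of closed edges to shave off the $+2$. If anything, your integrality argument makes fully explicit a step the paper dispatches with the terser remark that the slice ``must exclude the partial edges at each end of the geodesic.''
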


\begin{proof}
  Let $f \from S \to T$ be a length minimizing morphism.  Suppose $y$
  is a point in the interior of the edge $t$.  By
  Lemma~\ref{lem:span-slice} any two points in the slice $\calC_t$ are
  contained in a geodesic between points in $f^{-1}(y)$, which by Lemma~\ref{lem:vanishing-path} has length at most $4\lambda(S,T)^2 + 2$.  The endpoints of this geodesic are interior to edges, while the slice is a union of closed edges; in particular the slice must exclude the partial edges at each end of the geodesic.  Thus the two points in the slice have distance at most $4\lambda(S,T)^2$.
\end{proof}

\noindent \emph{Proof of Theorem \ref{thm:Horbez}} (cf.~Proposition 2.8 in \cite{un:Horbez}).  
  Fix an edge $t \in \calE(T)$ and a point $y$ in the interior of $t$.
  Let $f \from S \to T$ be a length minimizing morphism.  The
  cardinality of $f^{-1}(y)$ is at most $\rk(F)\ell(f)$.  By Lemma~\ref{lem:span-slice} and Corollary~\ref{co:slice-volume}, the slice $\calC_t$ is covered by
  the union of $\frac{1}{2}\bigl(\rk(F)\ell(f)\bigr)^2$ edge paths of
  length at most $4\lambda(S,T)^2$.  Thus $\vol(\calC_t) \leq
  2\rk(F)^2\lambda(S,T)^4$.  Hence by \eqref{eq:intersection}, we have
  $i(S,T) \leq 2\rk(F)^3\lambda(S,T)^4$ as claimed.
\qed

%%%%%%%%%%%%%%%%%%%%% SECTION sec:intersection-subgroups %%%%%%%%%%%%%%%%%%

\section{Subgroups and volume bounds}\label{sec:intersection-subgroups}

Given trees $S,T \in cv$ and a nontrivial finitely generated subgroup
$A \leq F$, there exist nonempty subtrees $S_A \subset S$, $T_A
\subset T$, on each of which $A$ acts minimally. We can thus consider the
Guirardel core $\calC(S_A \times T_A)$ for these minimal subtrees with
respect to the action of $A$. We might hope that, if $A$ is a free
factor of $F$, then $\calC(S_A \times T_A)$ embeds into $\calC(S
\times T)$, so that $i(S_A,T_A)$ is always dominated by $i(S,T)$.
Unfortunately this appears to be too much to expect, but we do
achieve:

\begin{proposition}\label{prop:minimal-bound}
  Let $A$ be a noncyclic finitely generated subgroup of $F$. Suppose
  that $S,T \in \cv$ are unit roses and let $S_A \subset S$, $T_A
  \subset T$ be the minimal subtrees with respect to $A$.  Then:
  \begin{equation*}
    %\label{eq:minimal-bound}
    i(S_A,T_A) \leq 6\cplx(A) \cdot \lambda(S,T)^3 \cdot i(S,T).
  \end{equation*}
\end{proposition}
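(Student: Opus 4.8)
The plan is to realize the $A$--core as a literal subset of the $F$--core and then control how many edges of $T_A/A$ can actually support a square. First I would use the cylinder criterion of Lemma~\ref{lem:core-criteria} to prove a monotonicity statement. Since $\bd S_A \subset \bd S$ and $\bd T_A \subset \bd T$, with the $A$--equivariant boundary homeomorphism being the restriction of the $F$--equivariant $\bd$, one has $\Cylo^\pm_{S_A}(s) = \Cylo^\pm_S(s) \cap \bd S_A$ for $s \in \calE(S_A)$, and similarly for $t$. Applying $\bd$ gives
\[
\bd\bigl(\Cylo^\pm_{S_A}(s)\bigr) \cap \Cylo^\pm_{T_A}(t) = \bd\bigl(\Cylo^\pm_S(s)\bigr) \cap \Cylo^\pm_T(t) \cap \bd T_A,
\]
so whenever $s \times t$ is a square of $\calC(S_A \times T_A)$ it is also a square of $\calC(S \times T)$. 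In slice notation, writing $\calC^A_t$ for a slice of the $A$--core, this yields $\calC^A_t \subseteq \calC_t$ and symmetrically $\calC^A_s \subseteq \calC_s$. In particular $\vol(\calC^A_t) \le \vol(\calC_t)$, and $\vol(\calC_t)$ is a single summand of \eqref{eq:intersection}, hence at most $i(S,T)$.

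Next I would apply \eqref{eq:intersection} to the $A$--core, $i(S_A,T_A) = \sum_{[t] \in \calE(T_A/A)} \vol(\calC^A_{t})$, in which only edges with nonempty slice contribute. By the previous paragraph each surviving slice has volume at most $i(S,T)$, so it suffices to bound by $6\cplx(A)\lambda(S,T)^3$ the number $N$ of edges of $T_A/A$ that support a square; then $i(S_A,T_A) \le N\cdot i(S,T)$. The step I expect to drive the argument is the observation that \emph{supporting a square is constant along maximal arcs of valence--two vertices}. As $t$ ranges over the interior edges of a maximal segment of $T_A$ all of whose interior vertices have valence two, the partition of $\bd T_A = \Lambda(A)$ into the two sides of $t$ never changes, since no limit points are redistributed in the absence of branching. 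Hence $\Cylo^\pm_{T_A}(t)$ is independent of $t$ along the segment, and by Lemma~\ref{lem:core-criteria} a fixed edge $s$ of $S_A$ either crosses every interior edge of the segment or none of them.

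This rigidity bounds $N$. If a maximal valence--two arc supports any square $s\times t$, then $s$ crosses every interior edge of the arc, so the entire interior of the arc lies in $\calC^A_s \subseteq \calC_s$; by the $S \leftrightarrow T$ symmetric form of Corollary~\ref{co:slice-volume} this slice has diameter at most $4\lambda(S,T)^2$, forcing the arc to have length at most $4\lambda(S,T)^2 + 2$. On the other hand, collapsing valence--two vertices exhibits $T_A/A$ as a union of at most $\cplx(A) = 3\rk(A)-3$ such maximal arcs, since the reduced graph has no valence--one or valence--two vertices and fundamental group of rank $\rk(A)$. Therefore, using $\lambda(S,T) \ge 1$,
\[
N \le \cplx(A)\bigl(4\lambda(S,T)^2 + 2\bigr) \le 6\cplx(A)\lambda(S,T)^3,
\]
and the proposition follows.

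The main obstacle is the middle step: confirming that the two--sided cylinder partition is genuinely locally constant along valence--two arcs and that this interacts correctly with Lemma~\ref{lem:core-criteria}, so that a single transverse edge of $S_A$ must cross the whole arc. Everything else is routine: the boundary--restriction identity is a direct consequence of convexity of $S_A,T_A$, and the arc count is a standard Euler characteristic estimate. In carrying it out I would treat $\calC_s$ as a subtree of $T$ so that containing a geodesic sub-arc of length $L$ indeed forces diameter at least $L$, and I would make sure loops (maximal arcs running from a branch point back to itself) are included in the count of at most $\cplx(A)$ arcs.
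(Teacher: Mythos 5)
Your argument is correct and follows the same overall strategy as the paper's proof: both establish the slice containment (squares of $\calC(S_A \times T_A)$ are squares of $\calC(S\times T)$) from the cylinder criterion of Lemma~\ref{lem:core-criteria}, both observe that slices are constant along natural (maximal valence--two) arcs of $T_A$, both bound the number of such arcs by $\cplx(A)$, and both reduce the proposition to bounding the length of a natural arc whose slice is nonempty. The one genuine difference is in that last estimate. The paper takes interior points $x_1, x_n$ in the two extreme edges of the arc, applies Lemmas~\ref{lem:span-slice} and~\ref{lem:vanishing-path} to produce preimages $y_1 \in f^{-1}(x_1)$, $y_n \in f^{-1}(x_n)$ with $d_S(y_1,y_n) \leq 4\lambda(S,T)^2 + 2$, and then pushes forward under $f$ to get $\ell_{T_A}(\tne) \leq \ell_T(S)\bigl(4\lambda(S,T)^2+2\bigr) < 6\lambda(S,T)^3$. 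You instead exploit the symmetry of the core: a square over one edge of the arc forces squares over every edge of the arc, so the whole arc lies in the slice $\calC_s \subset T$, whose diameter is at most $4\lambda(S,T)^2$ by the $S \leftrightarrow T$ symmetric form of Corollary~\ref{co:slice-volume}. Your route avoids the extra factor of $\ell_T(S)$ and in fact yields the slightly stronger bound $i(S_A,T_A) \leq \bigl(4\lambda(S,T)^2+2\bigr)\cplx(A)\, i(S,T)$, at the mild cost of invoking the symmetric version of Corollary~\ref{co:slice-volume} (slices in $T$ over edges of $S$), which the paper leaves implicit; since $\lambda(S,T) \geq 1$ for unit roses, the stated inequality follows either way.
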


\begin{proof}
  By Equation (\ref{eq:intersection}), we have:
  \begin{align*}
    i(S,T) &= \sum_{e \in \calE(T/F)} \vol(\calC_\te)
    \end{align*}
    %\label{eq:i(S,T)} \\
  \begin{align}
    i(S_A,T_A) &= \sum_{e \in \calE(T_A/A)}
    \ell_{T_A}(\te)\vol(\calA_\te)\label{eq:i(SA,TA)}
  \end{align}
  where $\calC_\te \subset S$ and $\calA_\te \subset S_A \subset S$
  are the slices in the respective cores. We denote by 
  $\bd \from \bd S \to \bd T$, the canonical
  $F$--equivariant homeomorphism, and by $\bd_A \from \bd S_A
  \to \bd T_A$, the canonical $A$--equivariant homeomorphism. Observe 
  that $\bd \big|_{\bd S_A} = \bd_A$.

  First, we claim that for each edge $\te \subset T_A \subset T$ we
  have that $\calA_\te \subseteq \calC_\te$. Indeed, let $s$ be an
  edge in $\calA_\te$. By Lemma~\ref{lem:core-criteria}, each of the
  four sets $\bd_A(\Cylo^{(\pm)}_{S_A}(s)) \cap
  \Cylo^{(\pm)}_{T_A}(\te)$ is non-empty. As
  $\bd_A(\Cylo^{(\pm)}_{S_A}(s)) = \bd(\Cylo^{(\pm)}_{S_A}(s)) \subset
  \bd(\Cylo^{(\pm)}_S(s))$ and $\Cylo^{(\pm)}_{T_A}(\te) \subset
  \Cylo^{(\pm)}_T(\te)$, each of the four sets
  $\bd(\Cylo^{(\pm)}_S(s)) \cap \Cylo^{(\pm)}_T(\te)$ is nonempty.
  Hence $s$ is an edge in $\calC_\te$.

  By a \emph{natural edge} of $T_A$ we mean an edge path
  $\tne = \te_1,\te_2,\ldots,\te_n$ that is a connected component of $T_A -
  \calV_{\geq 3}(T_A)$, where $\calV_{\geq 3}(T_A)$ is the collection
  of vertices of degree at least three. A natural edge in $T_A/A$ is
  the image of a natural edge in $T_A$; the set of all natural edges
  is denoted $\calE_N(T_A/A)$.

  Suppose that $\tne$ is a natural edge of $T_A$ consisting of the edge
  path $\te_1,\te_2,\ldots,\te_n$. Then since
  $\Cylo^{(\pm)}_{T_A}(\te_i) = \Cylo^{(\pm)}_{T_A}(\te_j)$ for all
  $i,j$, from Lemma~\ref{lem:core-criteria} we see that $\calA_{\te_i}
  = \calA_{\te_j}$. Therefore, we are justified in writing
  $\calA_{\tne}$ to denote any of the slices $\calA_{\te_i}$. Applying
  the observation that $\calA_{\te_i} \subseteq \calC_{\te_i}$ we have
  that: 
  \begin{equation}\label{eq:natural-slice}
    \calA_\tne \subseteq \bigcap_{i=1}^n \calC_{\te_i}
  \end{equation}

Next let us bound $\ell_{T_A}(\tne)$ whenever $\calA_{\tne}$ is not empty.  Let $f \from S \to T$ be a length minimizing morphism, and consider interior points $x_1 \in \te_1$ and $x_n \in \te_n$.  If there exists a point $p \in \calA_{\tne} \subseteq \calC_{\te_1} \cap \calC_{\te_n}$, it lives in a geodesic between a pair of points in $f^{-1}(x_1)$, by Lemma~\ref{lem:span-slice}.  By Lemma~\ref{lem:vanishing-path}, this path has length at most $4\lambda(S,T)^2 + 2$.  The same is true replacing $x_1$ with $x_n$.  Because $p$ is in the intersection of these two bounded geodesics, we may choose $y_1 \in f^{-1}(x_1)$ and $y_n \in f^{-1}(x_n)$ so that $d(y_1,y_n)\leq 4\lambda(S,T)^2 + 2$.  Furthermore, $d(x_1,x_n) \leq \ell(f)d(y_1,y_n)$, and we may choose $x_1$ and $x_n$ so that $d(x_1,x_n)$ is arbitrarily close to $\ell_{T_A}(\te)$.  Thus we conclude that,  when $\vol(\calA_{\tne}) > 0$,  
\[\ell_{T_A}(\tne) \leq \ell_T(S)\bigl(4\lambda(S,T)^2 + 2\bigr)< 6\lambda(S,T)^3.\]

Rewriting \eqref{eq:i(SA,TA)}, we
  get:
  \begin{align*}\label{eq:i(SA,TA)2}
    i(S_A,T_A) &= \sum_{e \in \calE(T_A/A)}
    \ell_{T_A}(\te)\vol(\calA_\te) \\
    &= \sum_{\nate \in \calE_N(T_A/A)}
    \ell_{T_A}(\tne)\vol(\calA_\tne) \\
    & \leq \sum_{\nate \in \calE_N(T_A/A)} 6\lambda(S,T)^3\vol(\calA_\tne)
  \end{align*} 
  By \eqref{eq:natural-slice}, $\vol(\calA_\tne) \leq i(S,T)$
  for every natural edge $\nate \in \calE_N(T_A/A)$.  As $T_A/A$ has at most $\cplx(A)$ natural edges, the proof is completed.  
\end{proof}

In order to eventually relate intersection number to the volume of an
invariant free factor, we find an effective lower bound for
intersection under a bounded iterate of a fully irreducible
automorphism.

\begin{lemma}\label{lem:vol-upperbound} 
  Let $\phi$ be a fully irreducible element of $\Out(F)$ and consider
  a tree $T \in cv$ with edge lengths at least 1. Then for some $1
  \leq P \leq \cplx(F)$:
  \[ \vol(T/F) \leq \cplx(F) \cdot i(T,T\phi^P). \]
\end{lemma}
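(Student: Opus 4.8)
The plan is to sum the intersection formula \eqref{eq:intersection} over the powers $P=1,\dots,\cplx(F)$ and then pull out a single good power by pigeonhole. Write $S_P=T\phi^P$ and, for a lift $\te$ of an edge $e\in\calE(T/F)$, let $\calC^P_\te\subset S_P$ denote the slice of $\calC(S_P\times T)$ over $\te$, so that \eqref{eq:intersection} reads $i(T,T\phi^P)=\sum_{e\in\calE(T/F)}\ell_T(\te)\,\vol(\calC^P_\te)$. I first note that $S_P$ is isometric to $T$ as a metric tree (passing to $T\phi^P$ only changes the marking), so every edge of $S_P$ has length at least $1$; hence a nonempty slice $\calC^P_\te$ contains an edge and satisfies $\vol(\calC^P_\te)\ge 1$. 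The heart of the argument is the claim that for every edge $e$ of $T/F$ there is some $P\in\{1,\dots,\cplx(F)\}$ with $\calC^P_\te\neq\emptyset$. Granting this, each $e$ contributes $\ell_T(\te)$ to at least one of the numbers $i(T,T\phi^P)$, so
\[
\sum_{P=1}^{\cplx(F)} i(T,T\phi^P)\ \ge\ \sum_{e\in\calE(T/F)}\ell_T(\te)\ =\ \vol(T/F),
\]
and some single $P$ must satisfy $i(T,T\phi^P)\ge \vol(T/F)/\cplx(F)$, which is the desired inequality.

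To prove the claim I pass to the sphere-system model recalled just before the statement: let $\Sigma$ be the sphere system in the doubled handlebody dual to $T/F$, so that $T\phi^P$ is dual to $\Sigma\phi^P$, and under the identification of $i(\cdot,\cdot)$ with geometric intersection of sphere systems the edge contribution $\ell_T(\te)\,\vol(\calC^P_\te)$ equals the (weighted) geometric intersection of the single sphere $\sigma_e\in\Sigma$ with $\Sigma\phi^P$. Thus $\calC^P_\te=\emptyset$ is equivalent to $\sigma_e$ being isotopic off $\Sigma\phi^P$; and since $\sigma_e\phi^P$ is one of the spheres of $\Sigma\phi^P$, this forces $\sigma_e$ and $\sigma_e\phi^P$ to be disjoint. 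Suppose for contradiction that $\calC^P_\te=\emptyset$ for every $P\in\{1,\dots,\cplx(F)\}$. Then $i(\sigma_e,\sigma_e\phi^P)=0$ for each such $P$, and since $\phi$ preserves geometric intersection, $i(\sigma_e\phi^a,\sigma_e\phi^b)=i(\sigma_e,\sigma_e\phi^{\,b-a})=0$ for all $0\le a<b\le\cplx(F)$. Hence the $\cplx(F)+1$ essential spheres $\sigma_e,\sigma_e\phi,\dots,\sigma_e\phi^{\cplx(F)}$ are pairwise disjoint and may be realized simultaneously disjointly.

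Since $\cplx(F)$ is the maximal number of isotopy classes of disjoint essential spheres in the doubled handlebody, two of these $\cplx(F)+1$ spheres must be isotopic: $\sigma_e\phi^a\simeq\sigma_e\phi^b$ with $0\le a<b\le\cplx(F)$, so $\sigma_e\simeq\sigma_e\phi^{\,b-a}$ with $1\le b-a\le\cplx(F)$. But a $\phi$--periodic essential sphere yields a $\phi$--periodic one-edge free splitting of $F$, hence a $\phi$--periodic proper free factor, contradicting full irreducibility of $\phi$. This establishes the claim and finishes the argument.

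The step I expect to be most delicate is the dictionary invoked in the claim: matching ``the slice $\calC^P_\te$ is empty'' with ``the sphere $\sigma_e$ can be isotoped off $\Sigma\phi^P$,'' which needs care with the weighted interpretation of \eqref{eq:intersection} and with additivity of geometric intersection over the spheres of $\Sigma$ (equivalently, one could argue directly from Lemma~\ref{lem:core-criteria} that an empty slice means the partition of $\bd T$ given by $\te$ is carried by a single vertex of $S_P$). The only other point requiring justification — that finitely many sphere classes with pairwise vanishing geometric intersection can be made simultaneously disjoint — is standard via Hatcher's normal form and should be cited rather than assumed.
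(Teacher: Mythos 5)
Your proof is correct and rests on the same key mechanism as the paper's: a slice of $\calC(T\times T\phi^P)$ over an edge is nonempty for some $1\le P\le\cplx(F)$ because the dual sphere $\sigma$ must essentially intersect $\sigma\phi^P$ for some such $P$ --- otherwise the $\cplx(F)+1$ spheres $\sigma,\sigma\phi,\dots,\sigma\phi^{\cplx(F)}$ could be realized pairwise disjointly, two of them would be isotopic, and $\phi$ would have a periodic proper free factor. Where you differ is in the bookkeeping that converts this into the volume bound: the paper applies the sphere argument only to the \emph{longest} edge $\te$ of $T$ and uses the chain $\vol(T/F)\le\cplx(F)\cdot\ell_T(\te)\le\cplx(F)\cdot\ell_T(\te)\vol(\calC_\te)\le\cplx(F)\cdot i(T,T\phi^P)$, so it needs only one good edge for one good power, whereas you show every edge has a nonempty slice for some power, sum over $P=1,\dots,\cplx(F)$, and extract a single good $P$ by averaging. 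Both routes yield the same constant $\cplx(F)$; the paper's is slightly leaner, while yours proves the marginally stronger fact that every edge eventually registers in some core. The two points you flag as delicate --- the dictionary between an empty slice over $\te$ and $\sigma_e$ being isotopic off $\Sigma\phi^P$, and the simultaneous disjoint realizability of sphere classes with pairwise vanishing intersection --- are exactly the facts the paper itself invokes (via Hatcher's normal position and Horbez's identification of the core with the dual complex to the sphere preimages), so you are not assuming anything beyond what the published argument already relies on.
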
 

\begin{proof} 
  The lemma will be proved once we establish that, for some $1 \leq P
  \leq \cplx(F)$, the slice of the core $\calC(T \times T\phi^P)$
  above the longest edge of $T$ contains at least one edge of
  $T\phi^P$ and hence has volume at least 1. This is because, if the
  longest edge is $\te$, by Equation~\eqref{eq:intersection} we would
  have:
  \[\vol(T/F) \leq
  \cplx(F) \cdot \ell_T(\te) \leq
  \cplx(F) \cdot \ell_T(\te)\cdot\vol(\calC_\te) \leq 
  \cplx(F) \cdot i(T,T\phi^P).\]

  To prove the claim above, we compute the intersection number using
  \emph{sphere systems} in the doubled handlebody with fundamental
  group $F$.  Briefly, let $M$ be the connect sum of as many copies of
  $S^1 \times S^2$ as the rank of $F$.  By $\BS$ we denote the
  simplicial complex whose $n$--simplicies correspond to $n+1$ isotopy
  classes of disjoint essential spheres in $M$, and by $\BS^\infty$ we
  denote the subcomplex of $\BS$ consisting of simplicies where the
  complement of the corresponding sphere system in $M$ has a
  non-simply-connected component.  By work of
  Laudenbach~\cite{ar:Laudenbach73,bk:Laudenbach74}, there is a
  well-defined simplicial action of $\Out(F)$ on $\BS$ that leaves
  $\BS^{\infty}$ invariant.  In this action, fully irreducible
  elements of $\Out(F)$ act on $\BS$ without periodic orbits.
  Hatcher~\cite{ar:Hatcher} established an $\Out(F)$--equivariant
  isomorphism between projectivized outer space $CV$ and $\BS~-~\BS^{\infty}$.  Under this isomorphism, edges of a marked graph
  $T/F$ correspond bijectively to spheres in some sphere system.

  Horbez details the correspondence between geometric intersection of
  the sphere systems and the volume of the Guirardel core
  \cite{un:Horbez}.  In particular he shows that, if $T_0, T_1$ are
  trees in $CV$, then for the corresponding sphere systems
  $\Sigma_0,\Sigma_1 \in \BS - \BS^{\infty}$, we have $i(T_0,T_1) =
  i(\Sigma_0,\Sigma_1)$, where the latter counts the minimal number of
  circles common to each sphere system, weighted appropriately.  This minimum is acheived by representative sphere systems in a notion of normal position first described by Hatcher in \cite{ar:Hatcher}. While
  not stated explicitly in \cite{un:Horbez}, it can be verified that
  each circle of intersection occurring on a given component $\sigma_0
  \in \Sigma_0$ corresponds to an edge in the slice of the core
  $\calC(T_0 \times T_1)$ above an edge in $T_0$ corresponding to the
  lift of the edge in $T_0/F$ dual to $\sigma_0$.  This is because $\calC(T_0 \times T_1)$ can be built as the 2-complex dual to preimages of $\Sigma_1$ and $\Sigma_2$ in the universal cover of $M$, where $\Sigma_1$ and $\Sigma_2$ are assumed to be in normal position.  For our
  considerations, the weights on the spheres do not matter as we are
  only concerned with showing that some slice is nonempty, i.e., that
  the corresponding sphere has nontrivial intersection with another
  sphere.

  Now, given $T \in cv$, we scale its edges equally by $1/\vol(T/F)$
  to get a point $\overline{T} \in CV$. The slice over the longest
  edge of $T$ in $\calC(T \times T\phi^P)$ is non-empty if and only if
  the slice over the longest edge of $\overline{T}$ in
  $\calC(\overline{T} \times \overline{T}\phi^P)$ is
  non-empty. Suppose $\Sigma$ is the sphere system dual to
  $\overline{T}$ and that $\sigma \in \Sigma$ is dual to the longest
  edge of $\overline{T}$.  As the maximum number of isotopy classes
  of disjoint essential spheres in $M$ is $\cplx(F)$ and as $\phi$ is
  fully irreducible, at least two of the spheres $\sigma,
  \phi(\sigma),\dots,\phi^{\cplx(F)}(\sigma)$ have essential
  intersection, so in particular $\sigma$ essentially intersects
  $\phi^P(\sigma)$ for some $1 \leq P \leq \cplx(F)$. By the foregoing
  discussion, this means the slice above the longest edge in $T/F$
  contains at least one edge, proving the lemma.
\end{proof}

\begin{remark}
\label{rem:via trees}
We use sphere systems in the proof above to potentially give intuition on how the reasoning parallels that for its mapping class group analogue in \cite{un:Koberda-Mangahas}.  Alternately, Lemma~\ref{lem:vol-upperbound} can be proved using trees instead; we give a sketch of that argument here.  As in Lemma~\ref{lem:vol-upperbound}, we will show that the slice of the core above the longest edge in $T$ contains at least one edge.  

To this end, let $\te$ be the longest edge in $T$ and consider $T_{1} = X$, the tree obtained by collapsing every edge other than the ones in the orbit of $\te$.  If $i(T_{1},X\phi) \neq 0$, then the slice $\calC_{\te} \subset X\phi$ is non-empty as $i(T_{1},X\phi) = \ell_{T_{1}}(\te)\vol(\calC_{\te})$.  As $T\phi$ collapses to $X\phi$, we see that the slice of the core above $\te$ in $\calC(T \times T\phi)$ is also non-empty.        

If $i(T_{1},X\phi) = 0$, then by~\cite[Theorem~6.1]{ar:Guirardel} the core is tree, denote it $T_{2}$.  Moreover, $T_{2}$ is a \emph{common refinement} for both $T_{1}$ and $X\phi$.  That is, there are edge collapse maps $T_{1} \leftarrow T_{2} \to X\phi$.  There is a unique edge in $T_{2}$ that is mapped homeomorphically to $\te \subset T_{1}$.  Abusing notation, we denote this edge by $\te$ as well.  

As before, if $i(T_{2},X\phi^{2}) \neq 0$, then we see that the slice $\calC_{\te} \subset X\phi^{2}$ is non-empty and therefore the slice above $\te$  in $\calC(T \times T\phi^{2})$ is also non-empty.  If $i(T_{2},X\phi^{2}) = 0$, then the core is a common refinement for the two trees $T_{2}$ and $X\phi^{2}$, denote it $T_{3}$.

Continue in this fashion, if $i(T_{k},X\phi^{k}) = 0$, denote the common refinement by $T_{k+1}$.  As $\phi$ is fully irreducible at each step $T_{k}/F$ has $k$ edges.  Thus for some $1 \leq P \leq \cplx(F)$, we must have that $i(T_{P},X\phi^{P}) \neq 0$.  Hence the slice of the core above $\te$ in $\calC(T \times T\phi^{P})$ contains at least one edge, proving the lemma.
\end{remark}

We apply the previous two results to prove: 

\begin{proposition}\label{prop:max-fixed-length} 
  Let $T=T_\calX$ be the Cayley graph with respect to the basis
  $\calX$, with all edges of unit length. If $\phi \in \Out(F)$ acts
  fully irreducibly on a proper free factor $A$ of rank at least 2,
  then for some $1 \leq P \leq \cplx(F)$\textup{:} \[\| A \|_\calX
  \leq 6\cplx(F)^2 \cdot \lambda(T,T\phi^P)^3 \cdot i(T,T\phi^P). \]
\end{proposition}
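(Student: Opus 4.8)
The plan is to pass to the subgroup $A$, apply Lemma~\ref{lem:vol-upperbound} to the fully irreducible automorphism that $\phi$ induces on $A$ acting on the minimal subtree $T_A$, and then use Proposition~\ref{prop:minimal-bound} to transport the resulting intersection-number bound from the subtrees back up to the ambient trees $T$ and $T\phi^P$. The whole argument is a splicing of the two preceding results, once the subtrees are correctly identified.

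First I would set up the restriction. Since $\phi$ preserves $[A]$ and acts fully irreducibly on $A$, I fix a representative $\Phi \in \Aut(F)$ with $\Phi(A) = A$ (starting from any representative and composing with a suitable inner automorphism), so that $\phi_A := [\Phi|_A] \in \Out(A)$ is fully irreducible. Let $T_A \subset T$ be the minimal $A$--subtree. Then $T_A$ is a minimal free simplicial $A$--tree whose edges inherit unit length from $T$, and $T_A/A$ is precisely the Stallings core of $A$, so $\vol(T_A/A) = \|A\|_\calX$ because every edge has length $1$. Thus Lemma~\ref{lem:vol-upperbound}, applied to $A$, $T_A$, and $\phi_A$, yields some $1 \leq P \leq \cplx(A)$ with
\[\|A\|_\calX = \vol(T_A/A) \leq \cplx(A)\cdot i(T_A, T_A\phi_A^P).\]

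The step I expect to require the most care is identifying $T_A\phi_A^P$ with a subtree of $T\phi^P$. The claim is that, as marked $A$--trees, the minimal subtree of $T\phi^P$ with respect to $A$ equals $T_A\phi_A^P$. To see this I would track the right action: on $T\phi^P$ the element $a \in A$ acts as $\Phi^P(a)$ acts on $T$, and since $\Phi^P(A)=A$ the set of isometries $\{\Phi^P(a) : a \in A\}$ equals $\{a : a\in A\}$, so the minimal subtree is literally $T_A$ as a subset, while the $A$--action has been precomposed with $\Phi^P|_A$. Hence $i(T_A, T_A\phi_A^P)$ equals the intersection number of the two minimal $A$--subtrees sitting inside $T$ and inside $T\phi^P$. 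I would also record that both $T$ and $T\phi^P$ are unit roses: they share the underlying metric tree of $T$, the right $F$--action still has a single vertex orbit (as $\Phi^P$ is an automorphism), and edge lengths are unchanged. This is exactly what is needed to invoke Proposition~\ref{prop:minimal-bound}.

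Finally I would assemble the estimate. Since $A$ is noncyclic, Proposition~\ref{prop:minimal-bound} applied to the pair $T$, $T\phi^P$ gives
\[i(T_A, T_A\phi_A^P) \leq 6\cplx(A)\cdot\lambda(T,T\phi^P)^3\cdot i(T,T\phi^P).\]
Chaining this with the displayed consequence of Lemma~\ref{lem:vol-upperbound} produces
\[\|A\|_\calX \leq 6\cplx(A)^2\cdot\lambda(T,T\phi^P)^3\cdot i(T,T\phi^P),\]
and since $A$ is a proper free factor we have $\cplx(A) \leq \cplx(F)$, whence $6\cplx(A)^2 \leq 6\cplx(F)^2$ and $P \leq \cplx(A) \leq \cplx(F)$, giving the stated bound for the same $P \in [1,\cplx(F)]$. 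The only genuinely delicate point is the twist identification in the third paragraph; once that is in hand the remainder is bookkeeping with the constants $\cplx(A)$ and $\cplx(F)$.
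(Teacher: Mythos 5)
Your proof is correct and follows essentially the same route as the paper's: apply Lemma~\ref{lem:vol-upperbound} to the minimal subtree $T_A$ with its free $A$--action, then transport the bound via Proposition~\ref{prop:minimal-bound} using $\cplx(A) \leq \cplx(F)$. The only difference is that you spell out the identification of $T_A\phi_A^P$ with the minimal $A$--subtree of $T\phi^P$, which the paper leaves implicit; this is a correct and worthwhile clarification, not a departure.
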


\begin{proof} The minimal tree $T_A \subset T$ of $A$ has natural
  edge-lengths at least $1$, and as such can be thought of as an
  element of $cv(A)$, the unprojectivized outer space for $A$. We can
  apply Lemma \ref{lem:vol-upperbound} to $T_A$ with its free
  $A$--action to obtain $P \leq \cplx(A)$ for which \[\| A \|_\calX =
  \vol(T_A/A) \leq \cplx(A) \cdot i(T_A,T_A\phi^P).\] The conclusion
  follows by applying Proposition \ref{prop:minimal-bound}, noting
  that $\cplx(A) \leq \cplx(F)$.
\end{proof}

%%%%%%%%%%%%%%%%%%%%%% SECTION sec:main %%%%%%%%%%%%%%%%%%%%%%%%
\section{Proof of Theorem~\ref{thm:main}}\label{sec:main}

In this section we prove the key new result for our algorithm, which
is Theorem~\ref{thm:main}. We wish to show that if $\phi \in \Out(F)$
is noncyclically reducible there is a $\phi$--periodic free factor
whose volume is bounded above by an exponential function in terms of
the word length $| \phi |_\calS$. Since $\phi$ is noncyclically
reducible, there is a $\phi^Q$--invariant free factor $A$ for which $1
< \rk(A) < \rk(F)$, where $Q=Q(\rk(F))$ is the constant power mentioned
at the beginning of Section~\ref{sec:listandcheck}. We can assume that
$\phi^Q|_A$ is fully irreducible.

Now let us complete the proof of Theorem \ref{thm:main}.  Let $T =
T_\calX$ be the Cayley graph with respect to the basis $\calX$, with
all edges of unit length. Combining with Theorem~\ref{thm:Horbez} with
Proposition~\ref{prop:max-fixed-length}, we have some $1 \leq P \leq
\cplx(F)$ for which
\begin{align*}
  \| A \|_\calX &\leq 6\cplx(F)^2 \cdot \lambda(T,T\phi^{QP})^3 \cdot i(T,T\phi^{QP})\\
  &\leq 12 \cplx(F)^5 \cdot \lambda(T,T\phi^{QP})^7
\end{align*}

Let $\lambda_\calX(\calS) = \max\{\lambda(T,T\psi) \mid \psi \in
\calS\}$.  Since the length of a composition of morphisms is bounded
by the product of their lengths, we have $\lambda(T,T\phi^{QP}) \leq
\lambda_\calX(\calS)^{QP|\phi|_\calS}$.
 
The proof of Theorem \ref{thm:main} is complete with:
\begin{align*}
  \| A \|_\calX &\leq 12\cplx(F)^5 \cdot \lambda_\calX(\calS)^{7QP|\phi|_\calS} \\
  &\leq C^{|\phi|_\calS},
\end{align*}
where $C = 12\cplx(F)^5 \cdot \lambda_\calS(\calX)^{7Q\cplx(F)}$
depends only on $\calX$ and $\calS$.

%%%%%%%%%%%%%%%%%%%%%%%%%%%%%%%%%%%%%%%%%%%%%%%%%%%%%%%%%%%%%%%%%%%%%%%%%%%%%

\bibliographystyle{amsplain}
\bibliography{test}

\providecommand{\bysame}{\leavevmode\hbox to3em{\hrulefill}\thinspace}
\providecommand{\MR}{\relax\ifhmode\unskip\space\fi MR }
% \MRhref is called by the amsart/book/proc definition of \MR.
\providecommand{\MRhref}[2]{%
  \href{http://www.ams.org/mathscinet-getitem?mr=#1}{#2}
}
\providecommand{\href}[2]{#2}
\begin{thebibliography}{10}

\bibitem{ar:A-KB12}
Yael Algom-Kfir and Mladen Bestvina, \emph{Asymmetry of outer space}, Geom.
  Dedicata \textbf{156} (2012), 81--92. \MR{2863547}

\bibitem{ar:Behrstock-Bestvina-Clay}
Jason Behrstock, Mladen Bestvina, and Matt Clay, \emph{Growth of intersection
  numbers for free group automorphisms}, J. Topol. \textbf{3} (2010), no.~2,
  280--310. \MR{2651361}

\bibitem{un:Bell}
Mark Bell, \emph{An algorithm for deciding reducibility}, {P}reprint,
  \href{http://arxiv.org/abs/1403.2997}{arXiv:math/1403.2997}.

\bibitem{ar:BernadeteGutierrezNitecki}
Diego Benardete, Mauricio Guti{{\'e}}rrez, and Zbigniew Nitecki, \emph{A
  combinatorial approach to reducibility of mapping classes}, Mapping class
  groups and moduli spaces of {R}iemann surfaces ({G}{\"o}ttingen,
  1991/{S}eattle, {WA}, 1991), Contemp. Math., vol. 150, Amer. Math. Soc.,
  Providence, RI, 1993, pp.~1--31. \MR{1234257 (94i:57025)}

\bibitem{ar:Bestvina-Handel92}
Mladen Bestvina and Michael Handel, \emph{Train tracks and automorphisms of
  free groups}, Ann. of Math. (2) \textbf{135} (1992), no.~1, 1--51.
  \MR{1147956 (92m:20017)}

\bibitem{ar:Bestvina-Handel95}
\bysame, \emph{Train-tracks for surface homeomorphisms}, Topology \textbf{34}
  (1995), no.~1, 109--140. \MR{1308491 (96d:57014)}

\bibitem{un:Calvez}
Matthieu Calvez, \emph{Fast {N}ielsen--{T}hurston classification of braids},
  {P}reprint, \href{http://arxiv.org/abs/1112.0165}{arXiv:math/1112.0165}.

\bibitem{ar:Chen-Hamidi-Tehrani}
Zong-He Chen and Hessam Hamidi-Tehrani, \emph{Surface diffeomorphisms via
  train-tracks}, Topology Appl. \textbf{73} (1996), no.~2, 141--167.
  \MR{1416757 (97h:57031)}

\bibitem{un:FH14}
Mark Feighn and Michael Handel, In preparation.

\bibitem{ar:Feighn-Handel}
\bysame, \emph{The recognition theorem for {${\rm Out}(F_n)$}}, Groups Geom.
  Dyn. \textbf{5} (2011), no.~1, 39--106. \MR{2763779 (2012b:20061)}

\bibitem{ar:Guirardel}
Vincent Guirardel, \emph{C\oe ur et nombre d'intersection pour les actions de
  groupes sur les arbres}, Ann. Sci. \'Ecole Norm. Sup. (4) \textbf{38} (2005),
  no.~6, 847--888. \MR{MR2216833}

\bibitem{un:HM-II}
Michael Handel and Lee Mosher, \emph{Subgroup decomposition in {${\sf
  Out}(F_n)$} {P}art {II}: A relative {K}olchin theorem}, {P}reprint,
  \href{http://arxiv.org/abs/1302.2379}{arXiv:math/1302.2379}.

\bibitem{ar:HM07}
\bysame, \emph{The expansion factors of an outer automorphism and its inverse},
  Trans. Amer. Math. Soc. \textbf{359} (2007), no.~7, 3185--3208 (electronic).
  \MR{2299451 (2008b:20031)}

\bibitem{ar:Hatcher}
Allen Hatcher, \emph{Homological stability for automorphism groups of free
  groups}, Comment. Math. Helv. \textbf{70} (1995), no.~1, 39--62. \MR{1314940
  (95k:20030)}

\bibitem{un:Horbez}
Camille Horbez, \emph{Sphere paths in outer space}, Algebr. Geom. Topol.
  \textbf{12} (2012), no.~4, 2493--2517. \MR{3020214}

\bibitem{un:Kapovich}
Ilya Kapovich, \emph{Algorithmic detectability of iwip automorphisms}, Bull.
  Lond. Math. Soc. \textbf{46} (2014), no.~2, 279--290.

\bibitem{un:Koberda-Mangahas}
Thomas Koberda and Johanna Mangahas, \emph{Effective algebraic detection of the
  {N}ielsen--{T}hurston classification of mapping classes}, {P}reprint,
  \href{http://arxiv.org/abs/1312.6141}{arXiv:math/1312.6141}.

\bibitem{ar:Laudenbach73}
F.~Laudenbach, \emph{Sur les {$2$}-sph{\`e}res d'une vari{\'e}t{\'e} de
  dimension {$3$}}, Ann. of Math. (2) \textbf{97} (1973), 57--81. \MR{0314054
  (47 \#2606)}

\bibitem{bk:Laudenbach74}
Fran{\c{c}}ois Laudenbach, \emph{Topologie de la dimension trois: homotopie et
  isotopie}, Soci{\'e}t{\'e} Math{\'e}matique de France, Paris, 1974, With an
  English summary and table of contents, Ast{{\'e}}risque, No. 12. \MR{0356056
  (50 \#8527)}

\bibitem{ar:Los}
J{\'e}r{\^o}me~E. Los, \emph{Pseudo-{A}nosov maps and invariant train tracks in
  the disc: a finite algorithm}, Proc. London Math. Soc. (3) \textbf{66}
  (1993), no.~2, 400--430. \MR{1199073 (93k:57067)}

\bibitem{un:Pfaff}
Catherine Pfaff, \emph{Ideal {W}hitehead graphs in {$Out(F_r)$} {II}: the
  complete graph in each rank}, J. Homotopy Relat. Struct. (2013).

\bibitem{ar:Stallings}
John~R. Stallings, \emph{Topology of finite graphs}, Invent. Math. \textbf{71}
  (1983), no.~3, 551--565. \MR{MR695906 (85m:05037a)}

\bibitem{ar:Turner}
Edward~C. Turner, \emph{Finding indivisible {N}ielsen paths for a train track
  map}, Combinatorial and geometric group theory ({E}dinburgh, 1993), London
  Math. Soc. Lecture Note Ser., vol. 204, Cambridge Univ. Press, Cambridge,
  1995, pp.~300--313. \MR{1320293 (96h:20051)}

\bibitem{ar:Vogtmann}
Karen Vogtmann, \emph{Automorphisms of free groups and outer space},
  Proceedings of the Conference on Geometric and Combinatorial Group Theory,
  Part I (Haifa, 2000), vol.~94, 2002, pp.~1--31. \MR{MR1950871 (2004b:20060)}

\end{thebibliography}

\end{document}